\begin{document}

\begin{frontmatter}
\title{Discrete minimax estimation with trees}
\runtitle{Discrete minimax estimation with trees}
\begin{aug}
  \author{\fnms{Luc} \snm{Devroye}\ead[label=e1]{lucdevroye@gmail.com}\thanksref{t1}}
  \and
  \author{\fnms{Tommy} \snm{Reddad}\ead[label=e2]{tommy.reddad@gmail.com}\thanksref{t2}}
  \affiliation{McGill University}

  \thankstext{t1}{Supported by NSERC Grant A3456.}
  \thankstext{t2}{Supported by NSERC PGS D scholarship 396164433.}
  
  \address{School of Computer Science \\
    McGill University \\
    3480 University Street \\
    Montr\'{e}al, Qu\'{e}bec, Canada \\
    H3A 2A7 \\
    \printead{e1,e2}
    }
  \runauthor{L. Devroye and T. Reddad}
\end{aug}

\begin{abstract}
  We propose a simple recursive data-based partitioning scheme which
  produces piecewise-constant or piecewise-linear density estimates on
  intervals, and show how this scheme can determine the optimal $L_1$
  minimax rate for some discrete nonparametric classes.
\end{abstract}

\begin{keyword}[class=MSC]
\kwd{60G07}
\end{keyword}

\begin{keyword}
\kwd{density estimation}
\kwd{minimax theory}
\kwd{discrete probability distribution}
\kwd{Vapnik-Chervonenkis dimension}
\kwd{monotone density}
\kwd{convex density}
\kwd{histogram}
\end{keyword}

\tableofcontents

\end{frontmatter}

\section{Introduction}

\emph{Density estimation} or \emph{distribution learning} refers to
the problem of estimating the unknown probability density function of
a common source of independent sample observations. In any interesting
case, we know that the unknown source density may come from a known
class. In the \emph{parametric} case, each density in this class can
be specified using a bounded number of real parameters, \eg the class
of all Gaussian densities with any mean and any variance. The
remaining cases are called \emph{nonparametric}. Examples of
nonparametric classes include bounded monotone densities on $[0, 1]$,
$L$-Lipschitz densities for a given constant $L > 0$, and log-concave
densities, to name a few. By \emph{minimax estimation}, we mean
density estimation in the minimax sense, \ie we are interested in the
existence of a density estimate which minimizes its approximation
error, even in the worst case.

There is a long line of work in the statistics literature about
density estimation, and a growing interest coming from the theoretical
computer science and machine learning communities; for a selection of
new and old books on this topic, see \cite{devroye-course,
  devroye-gyorfi, comb-methods, groeneboom-book, scott,
  silverman}. The study of nonparametric density estimation began as
early as in the 1950's, when Grenander~\cite{grenander} described and
studied properties of the maximum likelihood estimate of an unknown
density taken from the class of bounded monotone densities on
$[0, 1]$. Grenander's estimator and this class received much further
treatment over the years, in particular by Prakasa Rao~\cite{prakasa},
Groeneboom~\cite{groeneboom}, and Birg\'e~\cite{birge-order,
  birge-risk, birge}, who identified the optimal $L_1$-error minimax
rate up to a constant factor, and also gave an efficient adaptive
estimator which worked even when the boundedness parameter was
unknown. Since then, countless more nonparametric classes have been
studied, and many different all-purpose methods have been developed to
obtain minimax results about these classes: for the construction of
density estimates, see \eg the maximum likelihood estimate, skeleton
estimates, kernel estimates, and wavelet estimates, to name a few; and
for minimax rate lower bounds, see \eg the methods of Assouad, Fano,
and Le Cam~\cite{devroye-course, devroye-gyorfi, comb-methods,
  yu-survey}. See \cite{bellec, chat, gao, gunt} for recent related
works in nonparametric shape-constrained regression.

One very popular style of density estimate is the \emph{histogram}, in
which the support of the random data is partitioned into bins, where
each bin receives a weight proportional to the number of data points
contained within, and such that the estimate is constant with the
given weight along each bin. Then, the selection of the bins
themselves becomes critical in the construction of a good histogram
estimate. Birg\'e~\cite{birge-risk} showed how histograms with
carefully chosen exponentially increasing bin sizes will have
$L_1$-error within a constant factor of the optimal minimax rate for
the class of bounded non-increasing densities on $[0, 1]$. In general,
the right choice of an underlying partition for a histogram estimate
is not obvious.

In this work, we devise a recursive data-based approach for
determining the partition of the support for a histogram estimate of
discrete non-increasing densities. We also use a similar approach to
build a piecewise-linear estimator for discrete non-increasing convex
densities---see Anevski~\cite{anevski},
Jongbloed~\cite{jongbloed-thesis}, and Groeneboom, Jongbloed, and
Wellner~\cite{groeneboom-convex} for works concerning the maximum
likelihood and minimax estimation of continuous non-increasing convex
densities. Both of our estimators are \emph{minimax-optimal}, \ie
their minimax $L_1$-error is within a constant factor of the optimal
rate. Recursive data-based partitioning schemes have been extremely
popular in density estimation since the 1970's with
Gessaman~\cite{gessaman}, Chen and Zhao~\cite{chen}, Lugosi and
Nobel~\cite{lugosi-nobel}, and countless others, with great interest
coming from the machine learning and pattern recognition
communities~\cite{pattern}. Still, it seems that most of the
literature involving recursive data-based partitions are not
especially concerned with the rate of convergence of density
estimates, but rather other properties such as consistency under
different recursive schemes. Moreover, most of the density estimation
literature is concerned with the estimation of continuous probability
distributions. In discrete density estimation, not all of the
constructions or methods used to develop arguments for analogous
continuous classes will neatly apply, and in some cases, there are
discrete phenomena that call for a different approach. See Jankowski
and Wellner~\cite{jank} for a recent treatment on the properties of a
variety of estimators of discrete non-increasing densities.

\section{Preliminaries and summary}

Let $\sF$ be a given class of probability densities with respect to a
base measure $\mu$ on the measurable space $(\sX, \Sigma)$, and let
$f \in \sF$. If $X$ is a random variable taking values in
$(\sX, \Sigma)$, we write $X \sim f$ to mean that
\[
  \Pr\{X \in A\} = \int_A f \dif \mu , \qquad \text{for each
    $A \in \Sigma$.}
\]
The notation $X_1, \dots, X_n \iid f$ means that $X_i \sim f$ for each
$1 \le i \le n$, and that $X_1, \dots, X_n$ are mutually independent.

Typically in density estimation, either $\sX = \R^d$, $\Sigma$ is the
Borel $\sigma$-algebra, and $\mu$ is the Lebesgue measure, or $\sX$ is
countable, $\Sigma = \sP(\sX)$, and $\mu$ is the counting measure. The
former case is referred to as the \emph{continuous setting}, and the
latter case as the \emph{discrete setting}, where $f$ is more often
called a \emph{probability mass function} in the
literature. Throughout this paper, we will only be concerned with the
discrete setting, and even so, we still refer to $\sF$ as a class of
densities, and $f$ as a density itself. Plainly, in this case,
$X \sim f$ signifies that
\[
  \Pr\{X \in A\} = \sum_{x \in A} f(x) , \qquad \text{for each $A \in \sP(\sX)$.}
\]

Let $f \in \sF$ be unknown. Given the $n$ samples
$X_1, \dots, X_n \iid f$, our goal is to create a \emph{density
  estimate}
\[
  \hat{f}_n \colon \sX^n \to \R^\sX ,
\]
such that the probability measures corresponding to $f$ and
$\hat{f}_n(X_1, \dots, X_n)$ are close in \emph{total variation (TV)
  distance}, where for any probability measures $\mu, \nu$, their
TV-distance is defined as
\begin{equation}
  \TV(\mu, \nu) = \sup_{A \in \Sigma} |\mu(A) - \nu(A)| . \eqlabel{prob-interp}
\end{equation}
The TV-distance has several equivalent definitions; importantly, if
$\mu$ and $\nu$ are probability measures with corresponding densities
$f$ and $g$, then
\begin{align}
  \TV(\mu, \nu) &= \|f - g\|_1/2 , \eqlabel{l1-interp} \\
                &= \inf_{(X, Y) \colon X \sim f, Y \sim g} \Pr\{X \neq Y\} , \eqlabel{coupling-interp}
\end{align}
where for any function $h \colon \sX \to \R$, we define the $L_1$-norm
of $h$ as
\[
  \|h\|_1 = \sum_{x \in \sX} |h(x)| .
\]
(In the continuous case, this sum is simply replaced with an
integral.) In view of the relation between TV-distance and $L_1$-norm
in \eqref{l1-interp}, we will abuse notation and write
\[
  \TV(f, g) = \|f - g\|_1/2 .
\]

There are various possible measures of dissimilarity between
probability distributions which can be considered in density
estimation, \eg the Hellinger distance, Wasserstein distance,
$L_p$-distance, $\chi^2$-divergence, Kullback-Leibler divergence, or
any number of other divergences; see Sason and Verd\'{u}~\cite{verdu}
for a survey on many such functions and the relations between
them. Here, we focus on the TV-distance due to its several appealing
properties, such as being a metric, enjoying the natural probabilistic
interpretation of \eqref{prob-interp}, and having the coupling
characterization \eqref{coupling-interp}.

If $\hat{f}_n$ is a density estimate, we define the \emph{risk} of the
estimator $\hat{f}_n$ with respect to the class $\sF$ as
\[
  \sR_n(\hat{f}_n, \sF) = \sup_{f \in \sF} \E\bigl\{\TV(\hat{f}_n(X_1, \dots, X_n), f)\bigr\} ,
\]
where the expectation is over the $n$ i.i.d.\ samples from $f$, and
possible randomization of the estimator. From now on we will omit the
dependence of $\hat{f}_n$ on $X_1, \dots, X_n$ unless it is not
obvious. The \emph{minimax risk} or \emph{minimax rate} for $\sF$ is
the smallest risk over all possible density estimates,
\[
  \sR_n(\sF) = \inf_{\hat{f}_n} \sR_n(\hat{f}_n, \sF) .
\]

We can now state our results precisely. Let $k \in \N$ and let $\sF_k$
be the class of non-increasing densities on $\{1, \dots, k\}$, \ie set
of of all probability vectors $f \colon \{1, \dots, k\} \to \R$ for
which
\begin{equation}
  f(x + 1) \le f(x) , \qquad \text{for all $x \in \{1, \dots, k - 1\}$.} \eqlabel{decr}
\end{equation}
\begin{thm}\thmlabel{monotone-result}
  Let $f \colon \N \times \N \to \R$ be
  \[
    f(n, k) = \left\{
      \begin{array}{ll}
        \sqrt{k/n} & \mbox{if $2 \le k < 2 n^{1/3}$,} \\
        {\left( \frac{\log_2 (k/n^{1/3})}{n} \right)}^{1/3} & \mbox{if $2 n^{1/3} \le k < n^{1/3} 2^n$,} \\
        1 & \mbox{if $n^{1/3} 2^n \le k$.}
      \end{array} \right.
  \]
  There is a universal constant $C \ge 1$ such that, for sufficiently
  large $n$ not depending on $k$,
  \[
    \frac{1}{C} \le \frac{\sR_n(\sF_k)}{f(n, k)} \le C .
  \]
\end{thm}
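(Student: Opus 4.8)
The plan is to establish matching upper and lower bounds on $\sR_n(\sF_k)$ in three regimes of $k$ relative to $n$, each governed by a different phenomenon. Throughout I use the $L_1$/TV correspondence of \eqref{l1-interp}. The three regimes are: (i) the \emph{small-support regime} $2 \le k < 2n^{1/3}$, where the class behaves essentially like the full simplex on $k$ atoms and the rate is the parametric-type rate $\sqrt{k/n}$; (ii) the \emph{genuinely nonparametric regime} $2n^{1/3} \le k < n^{1/3}2^n$, where the monotonicity constraint produces an effective dimension of order $\log(k/n^{1/3})$ and the rate is $(\log_2(k/n^{1/3})/n)^{1/3}$, reminiscent of the classical $n^{-1/3}$ rate for monotone densities on $[0,1]$; and (iii) the \emph{trivial regime} $k \ge n^{1/3}2^n$, where $k$ is so large compared to $n$ that no estimator can do better than a constant, so $\sR_n(\sF_k) \asymp 1$.

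**Upper bound via the recursive tree estimator.** First I would analyze the data-based recursive partitioning (histogram) estimator described informally in the introduction. The estimator recursively splits $\{1,\dots,k\}$ into dyadic-type subintervals, stopping a branch when the empirical mass on it is small enough that refining further would cost more in variance than it saves in bias; on each final cell it outputs the constant (average) value. The key structural fact is that a non-increasing density is well-approximated in $L_1$ by a histogram with $O(\log(k)/\varepsilon)$-type cells of geometrically growing width: flattening $f$ on an interval $[a,b]$ with $b \le 2a$ incurs $L_1$-error at most $(f(a)-f(b))(b-a) \le f(a)\cdot a$, and summing a telescoping bound over $O(\log k)$ such dyadic blocks controls the total bias by the value $f$ takes near the left endpoint of each block, which by monotonicity and normalization is small. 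Against this I would bound the stochastic error on a partition with $m$ cells by the standard VC/bracketing estimate $\E\|\hat f_n - \E \hat f_n\|_1 \lesssim \sqrt{m/n}$ (this is where the Vapnik--Chervonenkis dimension keyword enters: the class of unions of intervals forming the partition has VC dimension $O(m)$). Optimizing the cell count $m \asymp \log(k/n^{1/3})$ against the bias yields the $(\log_2(k/n^{1/3})/n)^{1/3}$ rate in regime (ii); truncating $m$ at $k$ recovers $\sqrt{k/n}$ in regime (i); and once $\log k \gtrsim n$ the bound saturates at a constant, matching regime (iii). I also need to verify that the \emph{adaptive} (data-based) stopping rule achieves, up to constants, what the oracle partition achieves — this is a routine but slightly delicate argument comparing empirical and true masses on each candidate cell via Bernstein's inequality.

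**Lower bound via Assouad / Le Cam.** For the matching lower bounds I would use Assouad's lemma (as cited in the excerpt). In regime (ii) the construction is a perturbation family: take a fixed "staircase" base density supported on $O(\log(k/n^{1/3}))$ geometric blocks and, within each block, perturb $f$ up-then-down by a small bump of height $\asymp \delta$ and width comparable to the block, where the perturbations are indexed by a hypercube $\{0,1\}^m$ with $m \asymp \log(k/n^{1/3})$ and each coordinate flip keeps $f$ non-increasing. Each flipped coordinate changes $f$ by $\asymp \delta \cdot (\text{block width})$ in $L_1$, summing to $L_1$-separation $\asymp m\delta \cdot(\text{typical width})$; the per-coordinate KL/Hellinger cost is $\asymp n\delta^2\cdot(\text{block width})$, and balancing forces the $n^{-1/3}$-type scaling with the extra $\log^{1/3}$ factor from having $m$ independent coordinates. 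In regime (i) a simpler perturbation on all $k$ atoms of the near-uniform density gives $\sqrt{k/n}$ by the same balancing (here monotonicity is essentially free since a near-uniform density can absorb $\pm$ perturbations). In regime (iii) I would just exhibit two non-increasing densities at TV-distance $\asymp 1$ (e.g. concentrated on disjoint far-apart blocks of width growing like $2^n$) that are statistically indistinguishable from $n$ samples, giving the constant lower bound via Le Cam's two-point method.

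**Main obstacle.** I expect the principal difficulty to be the \emph{analysis of the data-driven stopping rule} in the upper bound: showing that the recursive scheme, which does not know $f$, nonetheless partitions the support in a way that is simultaneously as coarse as the oracle needs (to control variance) and as fine as the oracle needs near the "head" of the distribution (to control bias), uniformly over all $f \in \sF_k$ and all three regimes, with only universal constants lost. The matching of regime boundaries — in particular getting the precise transition at $k \asymp n^{1/3}$ and at $\log k \asymp n$ to line up between the estimator's performance and the Assouad lower bound — is the delicate bookkeeping that the rest of the paper presumably carries out.
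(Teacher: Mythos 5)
Your overall architecture (three regimes, histogram-type upper bound plus Assouad lower bound) matches the paper, but the way you propose to execute the upper bound in the main regime has a genuine gap. You plan to analyze the \emph{data-driven} recursive stopping rule directly and assert that a ``routine but slightly delicate'' Bernstein-type comparison shows it mimics the oracle partition uniformly over $\sF_k$. The paper deliberately does \emph{not} do this --- the performance of the greedy data-based tree is explicitly left as an open problem in its Discussion --- because that step is not routine. Instead, the paper's key idea is to use the \emph{idealized} tree $T^*$ (built from $f$ itself, so not an estimator) purely as an approximation device: it certifies that $f$ is within $O(\sqrt{\ell/n})$ in TV of some piecewise-constant density with $\ell = |L^*|$ pieces, where $\ell \le 12\, n^{1/3}(\log_2(k/n^{1/3}))^{2/3}$. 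One then runs the Devroye--Lugosi minimum distance estimate over the class of all $\ell$-piece piecewise-constant densities, whose Yatracos class lies in the unions of at most $\ell$ intervals and hence has VC dimension at most $2\ell$, giving risk of order $\sqrt{\ell/n}$, i.e.\ $(\log_2(k/n^{1/3})/n)^{1/3}$. Without this device (or an actual analysis of the adaptive rule, which you do not supply), your upper bound is incomplete. Relatedly, your bookkeeping is off: a partition with $m$ cells of order $\log(k/n^{1/3})$ gives stochastic error $\sqrt{m/n}$ of order $(\log(k/n^{1/3})/n)^{1/2}$ and bias of constant order; the correct cell count is $m$ of order $n^{1/3}(\log(k/n^{1/3}))^{2/3}$, and likewise the Assouad hypercube should have $r$ of order $(n\log^2(k/n^{1/3}))^{1/3}$ coordinates (with per-coordinate $L_1$ separation $\eps/r$ and squared Hellinger $\eps^2/r$, $\eps$ of order $(\log(k/n^{1/3})/n)^{1/3}$), not $\log(k/n^{1/3})$ coordinates.

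A second concrete error is your regime-(iii) lower bound. You cannot exhibit two non-increasing densities at TV-distance of constant order that are ``statistically indistinguishable from $n$ samples'': constant TV forces constant Hellinger distance, so the $n$-fold product measures separate at an exponential rate and any two-point (Le Cam) bound decays with $n$. A constant lower bound needs many hypotheses; the paper obtains it for free by monotonicity of $\sR_n(\sF_k)$ in $k$ (the classes are nested) together with the regime-(ii) Assouad construction evaluated at $k$ near $n^{1/3}e^n$, where the perturbation size $\eps$ is already of constant order. Your regime-(i) and regime-(ii) lower-bound sketches are in the right spirit (geometrically growing blocks to preserve monotonicity, up--down perturbations within blocks), and correspond to the paper's construction once the block count is corrected as above; the small-$k$ case in the paper additionally uses a linear ``staircase'' construction to get $\sqrt{k/n}$ while keeping the perturbed densities monotone.
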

Let $\sG_k$ be the class of all non-increasing convex densities on
$\{1, \dots, k\}$, so each $f \in \sG_k$ satisfies \eqref{decr} and
\[
  f(x) - 2 f(x + 1) + f(x + 2) \ge 0 , \qquad \text{for all $x \in \{1, \dots, k - 2\}$.} 
\]
\begin{thm}\thmlabel{convex-result}
  Let $g \colon \N \times \N \to \R$ be
  \[
    g(n, k) = \left\{
      \begin{array}{ll}
        \sqrt{k/n} & \mbox{if $2 \le k < 3 n^{1/5}$ ,} \\
        {\left(\frac{\log_3 (k/n^{1/5})}{n}\right)}^{2/5} & \mbox{if $3 n^{1/5} \le k < n^{1/5} 3^n$,} \\
        1 & \mbox{if $n^{1/5} 3^n \le k$.}
      \end{array} \right.
  \]
  There is a universal constant $C \ge 1$ such that, for sufficiently
  large $n$ not depending on $k$,
  \[
    \frac{1}{C} \le \frac{\sR_n(\sG_k)}{g(n, k)} \le C .
  \]
\end{thm}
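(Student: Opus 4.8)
The plan is to run the same three-part program as for Theorem~\thmref{monotone-result}, but with a piecewise-linear estimate in place of the histogram, ternary ($\times 3$) scales in place of the dyadic ones, and Vapnik--Chervonenkis bounds for piecewise-linear function classes in place of those for unions of intervals; as there, one must check that every ``$n$ sufficiently large'' is uniform in $k$. The range $k \ge n^{1/5}3^n$ is immediate: the upper bound is the trivial $\TV(\hat f_n, f) \le 1$, and the lower bound $\sR_n(\sG_k) \gtrsim 1$ comes from the medium-range Assouad construction below with its number of perturbed scales capped at $n$ --- which is precisely why the breakpoint sits where $\log_3(k/n^{1/5}) = n$. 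So it remains to produce matching bounds in $2 \le k < 3n^{1/5}$, where the target is $\sqrt{k/n}$, and in $3n^{1/5} \le k < n^{1/5}3^n$, where it is $\bigl(\log_3(k/n^{1/5})/n\bigr)^{2/5}$.

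\emph{Upper bound.} I would use the paper's recursive data-based partitioning scheme in its piecewise-linear mode: starting from $\{1,\dots,k\}$, recursively refine the current cell into sub-cells of geometrically increasing length (ratio $3$) as long as its empirical mass exceeds a threshold $\tau$ and its depth is below a cap $p$, and on each leaf output the best affine $L_1$-fit among non-increasing convex candidates (with, if needed, a minimum-distance / Yatracos selection to get the right stochastic dependence). The error splits into a stochastic and an approximation part. The stochastic part is $O(\sqrt{m/n})$ with $m$ the (random, bounded) number of leaves: the Yatracos sets attached to piecewise-linear functions with $O(m)$ pieces are unions of $O(m)$ intervals and so have Vapnik--Chervonenkis dimension $O(m)$, which bounds the uniform deviation over them and avoids any union bound over the data-dependent partition. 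The approximation part is the $L_1$-distance from $f$ to non-increasing convex densities that are piecewise-linear on the generated partition; interpolating $f\in\sG_k$ at the breakpoints stays non-increasing and convex (convexity of $f$ makes the chord slopes non-decreasing), and on a cell $I = \{a,\dots,b\}$ the $L_1$-error of the endpoint chord is at most a constant times $(b-a)^2$ times the increase of the first difference $x\mapsto f(x+1)-f(x)$ across $I$. Summing these per-cell bounds, using the telescoping of the first differences together with the elementary inequalities $f(x)\le 1/x$ and $f(x)-f(x+1)\le C/x^2$ valid for every $f\in\sG_k$ (the second being a second appearance of convexity), and tuning $\tau$ and $p\asymp\log_3(k/n^{1/5})$, makes the approximation part $O\bigl((\log_3(k/n^{1/5})/n)^{2/5}\bigr)$ while keeping $\sqrt{m/n}$ of the same order; the $n^{1/5}$ shift inside the logarithm comes out of the interplay between $\tau$ and $n$. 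For $2\le k < 3n^{1/5}$ the same scheme --- indeed the plain empirical measure --- already gives $O(\sqrt{k/n})$.

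\emph{Lower bound.} I would run an Assouad-type argument over a hypercube $\{f_\sigma : \sigma\in\{0,1\}^p\}\subset\sG_k$ with $p\asymp\min\bigl(n,\log_3(k/n^{1/5})\bigr)$. Fix a base convex density that is piecewise-linear between the ternary breakpoints $1,3,9,\dots$ and roughly follows the profile $x\mapsto 1/x$, and let the $i$-th coordinate $\sigma_i$ toggle a small \emph{one-sided} convex kink concentrated at the $i$-th scale, with amplitude chosen so each toggle moves $f_\sigma$ by the intended amount in $L_1$ while the total mass disturbed at scale $i$ is $O(1/n)$, so that a single observation is essentially uninformative about $\sigma_i$. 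One then verifies that every sign pattern gives an element of $\sG_k$ --- this is the step where the perturbations must be genuinely multiscale and one-sided, the compactly supported bumps used for merely monotone densities being incompatible with convexity --- and Assouad's lemma yields $\sR_n(\sG_k)\gtrsim\min\bigl(1,(\log_3(k/n^{1/5})/n)^{2/5}\bigr)$, covering the medium and large ranges at once. For $2\le k < 3n^{1/5}$ one restricts instead a standard $\sqrt{k/n}$ construction to $\sG_k$, using the near-uniform triangle on $\{1,\dots,k\}$ as base density and perturbing $\Theta(k)$ of its second differences.

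\emph{Main obstacle.} I expect the bottleneck to be the approximation (bias) bound for the piecewise-linear fit. The per-cell error now scales like $(b-a)^2$ rather than like $(b-a)$ as in the histogram case, so pinning down the exponent $2/5$ --- and the exact logarithmic factor with its $n^{1/5}$ shift --- needs a careful multiscale summation of second differences against the telescoping identity and the convexity bound $f(x)-f(x+1)\lesssim 1/x^2$, not a crude cell-by-cell estimate. The second delicate point is the matching lower bound: since convexity is a global constraint the perturbations cannot be localized, and building a hypercube of densities that remains inside $\sG_k$ for \emph{every} sign pattern while keeping both the $L_1$-separation and the indistinguishability is harder than in the monotone case. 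Both are the (strictly more involved) analogues of the corresponding steps in the proof of Theorem~\thmref{monotone-result}.
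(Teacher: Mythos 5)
Your plan reproduces the paper's architecture — ternary multiscale partition, piecewise-linear fits on cells, minimum-distance/Yatracos selection with a VC bound for the stochastic term, Assouad for the lower bound, and the same three regimes in $k$ — and several ingredients are sound: the chord error on a cell of length $h$ is indeed $O(h^2)$ times the slope variation across the cell, the bounds $f(x)\le 1/x$ and $f(x)-f(x+1)\le 4/x^2$ do hold on $\sG_k$, and reducing the stochastic term to the VC dimension of unions of $O(m)$ intervals is exactly how the paper proceeds. The gap is that the two steps that carry the theorem are asserted rather than carried out, and your specific choices do not obviously deliver them. For the upper bound everything hinges on exhibiting, for every $f\in\sG_k$, a piecewise-linear density with $m=O\bigl(n^{1/5}\log_3^{4/5}(k/n^{1/5})\bigr)$ pieces within TV distance $O\bigl((\log_3(k/n^{1/5})/n)^{2/5}\bigr)$ — with the $n^{1/5}$ shift inside the logarithm and no stray logarithmic factors, since the theorem claims constant-factor tightness over the whole range of $k$. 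The paper obtains this from a curvature-driven idealized splitting rule \eqref{convex-splitting-rule}: at a leaf the rule fails, so the bias is at most $\tfrac{41}{24}(f_v-2f_w+f_r)\le\tfrac{41}{24}\sqrt{f_u/n}$ (\propref{conv-id-tv}), while at internal nodes the rule forces the mass recursion $f_{u_{3i}}\gtrsim i^4/n$, which with H\"older caps the leaves per level at $O((nq_j)^{1/5})$. Your rule (split while the empirical mass exceeds $\tau$, fixed ratio $3$, depth cap $p$) measures mass, not curvature, so neither bound is available; the sentence ``summing these per-cell bounds \dots makes the approximation part $O((\log_3(k/n^{1/5})/n)^{2/5})$ while keeping $\sqrt{m/n}$ of the same order'' is precisely the content of the theorem and would still have to be proved, including the separate treatment of the first $\sim n^{1/5}$ atoms that produces the argument $k/n^{1/5}$ of the logarithm. (A Birg\'e-style deterministic partition with cell lengths $\asymp\delta x$, $\delta\asymp(\log_3(k/n^{1/5})/n)^{1/5}$, could plausibly be pushed through your telescoping argument, but that is a different scheme from the one you describe, and uniformity over $\sG_k$ would still need checking.)

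On the lower bound, your multiscale one-sided kinks are the right picture (the paper uses blocks $|A_i|\asymp(1-\eps)^{-(i-1)}$, levels $\beta_i\asymp(1-\eps)^{i-1}/r$, and a dip $\Delta_i$ placed at the $1/3$ or $2/3$ point of $A_i$), but your calibration — ``the total mass disturbed at scale $i$ is $O(1/n)$'' — is the wrong bookkeeping and cannot give the exponent $2/5$: with only $O(1/n)$ of mass moved per bit, the per-bit $L_1$ separation is $O(1/n)$ and Assouad yields at most $O(r/n)\ll(\log_3(k/n^{1/5})/n)^{2/5}$. What is actually needed, and what the paper verifies, is a per-bit $L_1$ separation $\asymp\eps^2/r$ together with per-bit squared Hellinger loss $\asymp\eps^4/r=O(1/n)$ — quadratically smaller than the mass moved because the perturbation rides on a baseline of comparable size — plus the support constraint $\sum_i|A_i|\le k$, which is what forces $r\lesssim n^{1/5}\log^{4/5}(k/n^{1/5})$ and hence the rate and the breakpoints; one must also check convexity and monotonicity of every sign pattern ($\Delta_i\le(\beta_i-\beta_{i+1})/3$ and the slope-matching condition \eqref{conv-cond} across blocks), and in the small-$k$ regime the second-difference perturbation must be scaled as in the paper to produce $\sqrt{k/n}$, not merely ``perturb $\Theta(k)$ second differences.'' So the proposal is a reasonable roadmap along the paper's route, but the approximation/complexity trade-off and the Assouad calibration — the places where the exponents $2/5$ and $4/5$ actually come from — are missing, and one stated criterion is incorrect as written.
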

We emphasize here that the above results give upper and lower bounds
on the minimax rates $\sR_n(\sF_k)$ and $\sR_n(\sG_k)$ which are
within universal constant factors of one another, for the entire range
of $k$.

Our upper bounds will crucially rely on the next results, which allow
us to relate the minimax rate of a class to an old and well-studied
combinatorial quantity called the \emph{Vapnik-Chervonenkis (VC)
  dimension}~\cite{vc}: For $\sA \subseteq \sP(\sX)$ a family of
subsets of $\sX$, the VC-dimension of $\sA$, denoted by $\VC(\sA)$, is
the size of the largest set $X \subseteq \sX$ such that for every
$Y \subseteq X$, there exists $B \in \sA$ such that $X \cap B =
Y$. See, \eg the book of Devroye and Lugosi~\cite{comb-methods} for
examples and applications of the VC-dimension in the study of density
estimation.

\begin{thm}[Devroye, Lugosi~\cite{comb-methods}]\thmlabel{minimum-distance-risk}
  Let $\sF$ be a class of densities supported on $\sX$, and let
  $\sF_\Theta = \{f_{n, \theta} \colon \theta \in \Theta\}$ be a class
  of density estimates satisfying
  $\sum_{x \in \sX} f_{n, \theta}(x) = 1$ for every
  $\theta \in \Theta$. Let $\sA_\Theta$ be the \emph{Yatracos class}
  of $\sF_\Theta$,
  \[
    \sA_\Theta = \Big\{ \{ x \in \sX \colon f_{n, \theta}(x) > f_{n, \theta'}(x)\} \colon \theta \neq \theta' \in \Theta \Big\} .
  \]
  For $f \in \sF$, let $\mu$ be the probability measure corresponding
  to $f$. Let also $\mu_n$ be the empirical measure based on
  $X_1, \dots, X_n \iid f$, where
  \[
    \mu_n(A) = \frac{1}{n} \sum_{i = 1}^n \mathbf{1}\{X_i \in A\} ,
    \qquad \text{for $A \in \sA_\Theta$}.
  \]
  Then, there is an estimate $\psi_n$ for which
  \[
    \TV(\psi_n, f) \le 3 \inf_{\theta \in \Theta} \TV(f_{n, \theta}, f) + 2 \sup_{A \in \sA_\Theta} |\mu_n(A) - \mu(A)| +
    \frac{3}{2 n} .
  \]
\end{thm}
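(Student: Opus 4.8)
The plan is to take for $\psi_n$ the \emph{minimum-distance (Scheff\'e) estimate} built from the Yatracos class $\sA_\Theta$. For $\theta \in \Theta$ write $\mu_\theta(A) = \sum_{x \in A} f_{n, \theta}(x)$, and define the data-dependent functional
\[
  T(\theta) = \sup_{A \in \sA_\Theta} |\mu_\theta(A) - \mu_n(A)| .
\]
Since an exact minimizer of $T$ over $\Theta$ need not exist, I would pick $\hat\theta$ (measurably, as a function of $\mu_n$, hence of $X_1, \dots, X_n$) with $T(\hat\theta) \le \inf_{\theta \in \Theta} T(\theta) + \frac{1}{2n}$, and set $\psi_n = f_{n, \hat\theta}$. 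Everything hinges on the fact that $T(\hat\theta)$ is essentially the only quantity we control directly, so the comparison of $\psi_n$ with the unknown $f$ must be routed through sets in $\sA_\Theta$ and through the empirical measure $\mu_n$.

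The key step is a pairwise estimate. Fix any $\theta \in \Theta$ and let $B = \{x \in \sX \colon f_{n, \hat\theta}(x) > f_{n, \theta}(x)\}$, so $B \in \sA_\Theta$ by definition of the Yatracos class. Because $f_{n, \hat\theta}$ and $f_{n, \theta}$ each sum to $1$, the Scheff\'e identity gives $\TV(f_{n, \hat\theta}, f_{n, \theta}) = \mu_{\hat\theta}(B) - \mu_\theta(B)$. Inserting $\mu_n(B)$ and using $B \in \sA_\Theta$,
\[
  \mu_{\hat\theta}(B) - \mu_\theta(B) = \bigl(\mu_{\hat\theta}(B) - \mu_n(B)\bigr) + \bigl(\mu_n(B) - \mu_\theta(B)\bigr) \le T(\hat\theta) + T(\theta) \le 2 T(\theta) + \frac{1}{2n} ,
\]
the last step by near-optimality of $\hat\theta$. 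It then remains to bound $T(\theta)$ by the approximation error $\TV(f_{n, \theta}, f)$ and the uniform deviation $\Delta := \sup_{A \in \sA_\Theta} |\mu_n(A) - \mu(A)|$: writing $T(\theta) \le \sup_{A \in \sA_\Theta} |\mu_\theta(A) - \mu(A)| + \Delta$, and noting that $|\mu_\theta(A) - \mu(A)| = \bigl|\sum_{x \in A}(f_{n, \theta}(x) - f(x))\bigr| \le \frac{1}{2}\|f_{n, \theta} - f\|_1 = \TV(f_{n, \theta}, f)$ for every $A$, we get $T(\theta) \le \TV(f_{n, \theta}, f) + \Delta$.

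Assembling, the triangle inequality $\TV(\psi_n, f) \le \TV(f_{n, \hat\theta}, f_{n, \theta}) + \TV(f_{n, \theta}, f)$ together with the two displays above gives
\[
  \TV(\psi_n, f) \le \bigl(2\,\TV(f_{n, \theta}, f) + 2\Delta + \tfrac{1}{2n}\bigr) + \TV(f_{n, \theta}, f) = 3\,\TV(f_{n, \theta}, f) + 2\Delta + \tfrac{1}{2n} ,
\]
and taking the infimum over $\theta \in \Theta$ (and using $\frac{1}{2n} \le \frac{3}{2n}$, the extra slack in the stated bound leaving room for also approximating the supremum defining $T$ when $\sA_\Theta$ is infinite) yields the theorem. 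The point I expect to be the real content here — rather than an obstacle, a thing to get right — is \emph{why the constant $3$ appears and cannot be lowered to $1$ by this route}: the Yatracos class consists only of Scheff\'e sets separating \emph{two estimates}, so in general it does not contain $\{f_{n, \theta} > f\}$, and hence $\TV(f_{n, \theta}, f)$ cannot be controlled by a $\mu_n$-versus-$\mu$ deviation directly; one is forced to compare $\psi_n$ against a near-optimal $f_{n, \theta}$ and pay an additional triangle-inequality step. Checking measurability of $\hat\theta$ and that all suprema are attained in the discrete, finite-support setting is routine.
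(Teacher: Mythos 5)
Your argument is correct and is exactly the standard minimum-distance (Scheff\'e/Yatracos) proof from Devroye and Lugosi, which the paper simply cites without reproving; the pairwise Scheff\'e-set comparison, the insertion of $\mu_n$, the near-minimizer slack, and the bound $T(\theta) \le \TV(f_{n,\theta}, f) + \Delta$ are all as in the original, and your $1/(2n)$ slack even gives a slightly better constant than the stated $3/(2n)$. The only point worth noting is the degenerate case $\hat\theta = \theta$, where $B$ need not lie in $\sA_\Theta$, but then $\TV(f_{n,\hat\theta}, f_{n,\theta}) = 0$ and the bound holds trivially.
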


The estimate $\psi_n$ in \thmref{minimum-distance-risk} is called the
\emph{minimum distance estimate} in \cite{comb-methods}---we omit the
details of its construction, though we emphasize that if computing
$\int_A f_{n, \theta}$ takes one unit of computation for any $\theta$
and $A$, then selecting $\psi_n$ takes time polynomial in the size of
$\sA$, which is often exponential in the quantities of interest; for
instance, if $\sA$ is the Yatracos class of $\sF_k$, then a simple
construction shows that $\sA$ contains all subsets of
$\{1, \dots, 2 \floor{k/2} \}$ containing only odd numbers, whence
\[
  2^{\floor{k/2}} \le |\sA| \le 2^k ,
\]
where the upper bound is trivial.

\begin{thm}[Devroye, Lugosi~\cite{comb-methods}]\thmlabel{tv-emp}
  Let $\sF, \sX, f, \mu, \mu_n$ be as in
  \thmref{minimum-distance-risk}, and let $\sA \subseteq
  \sP(\sX)$. Then, there is a universal constant $c > 0$ for which
  \[
    \sup_{f \in \sF} \E\left\{ \sup_{A \in \sA} |\mu_n(A) - \mu(A)| \right\} \le c \sqrt{\frac{\VC(\sA)}{n}} .
  \]
\end{thm}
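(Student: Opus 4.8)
The plan is to pass, by a symmetrization step, from the empirical process indexed by $\sA$ to a Rademacher process, and then to control that process by Dudley's chaining inequality fed with a polynomial (in $1/\delta$) bound on the covering numbers of a VC class. Write $d = \VC(\sA)$ and fix $f \in \sF$. First I would introduce an independent ``ghost'' sample $X_1', \dots, X_n' \iid f$ with empirical measure $\mu_n'$, together with independent Rademacher signs $\epsilon_1, \dots, \epsilon_n$. Since $\mu(A) = \E\{\mu_n'(A)\}$ for every $A$, Jensen's inequality gives $\E\{\sup_{A \in \sA}|\mu_n(A) - \mu(A)|\} \le \E\{\sup_{A \in \sA}|\mu_n(A) - \mu_n'(A)|\}$; and because the summands $\mathbf{1}\{X_i \in A\} - \mathbf{1}\{X_i' \in A\}$ have a distribution symmetric about zero jointly over $A$ (swapping $X_i$ and $X_i'$ flips every such summand at once), one may multiply the $i$th term by $\epsilon_i$ without changing the expectation and then split by the triangle inequality, to obtain
\[
  \E\left\{\sup_{A \in \sA} |\mu_n(A) - \mu(A)|\right\} \le 2\,\E\left\{\sup_{A \in \sA} \left|\frac{1}{n} \sum_{i = 1}^n \epsilon_i \mathbf{1}\{X_i \in A\}\right|\right\} .
\]
It therefore suffices to bound the right-hand side by $c\sqrt{d/n}$ uniformly in $f$.

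Next I would condition on $X_1, \dots, X_n$ and equip $\sA$ with the data-dependent $L_2$ pseudometric $\rho(A, B)^2 = \frac{1}{n}\sum_{i=1}^n(\mathbf{1}\{X_i \in A\} - \mathbf{1}\{X_i \in B\})^2$, noting $\rho \le 1$. For each fixed $A$ the variable $\frac{1}{n}\sum_i \epsilon_i\mathbf{1}\{X_i \in A\}$ is a sum of bounded independent mean-zero terms, so by Hoeffding's lemma the process $A \mapsto \frac{1}{n}\sum_i\epsilon_i\mathbf{1}\{X_i\in A\}$ is sub-Gaussian with respect to the metric $\rho/\sqrt n$. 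Dudley's entropy integral bound then gives
\[
  \E_\epsilon\left\{\sup_{A \in \sA}\left|\frac{1}{n}\sum_{i=1}^n\epsilon_i\mathbf{1}\{X_i\in A\}\right|\right\} \le \frac{C}{\sqrt n}\int_0^1\sqrt{\log N(\delta, \sA, \rho)}\,\dif\delta ,
\]
where $N(\delta, \sA, \rho)$ is the $\delta$-covering number of $\sA$ under $\rho$ (the contribution of a fixed base point is $O(1/\sqrt n)$ and harmless). The crucial input is Haussler's covering bound, which uses the Sauer--Shelah lemma to show that, for \emph{any} probability measure in place of $\mu_n$, $\log N(\delta, \sA, \rho) \le C d(1 + \log(1/\delta))$ for all $\delta \in (0, 1)$, with a constant independent of $n$. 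Plugging this in, $\int_0^1 \sqrt{\log N(\delta, \sA, \rho)}\,\dif\delta \le C'\sqrt d$, so the displayed Rademacher average is at most $c\sqrt{d/n}$ uniformly in $X_1, \dots, X_n$; taking expectations over the sample, combining with the symmetrization inequality, and then taking the supremum over $f \in \sF$ finishes the proof.

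I expect the covering estimate $\log N(\delta, \sA, \rho) \le Cd(1 + \log(1/\delta))$, rather than the symmetrization or the chaining bookkeeping, to be the technical heart: the point is that the exponent is \emph{linear} in $d$ and the constant is free of $n$, which is precisely what makes Dudley's integral converge to $O(\sqrt d)$ instead of $O(\sqrt{d\log n})$. The elementary alternative---bounding $\sup_{A \in \sA}$ by a maximum over the at most $(n+1)^d$ distinct traces $A \cap \{X_1, \dots, X_n\}$ guaranteed by Sauer--Shelah, then applying a sub-Gaussian maximal inequality---is easy but loses a spurious $\sqrt{\log n}$ factor, giving only $c\sqrt{d\log n/n}$. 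Removing that factor requires Haussler's sharper packing argument (equivalently, chaining performed directly on the finite set of traces); this is the one place where real care is needed, everything else being routine.
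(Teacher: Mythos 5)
Your argument is correct: symmetrization against a ghost sample, sub-Gaussian chaining via Dudley's entropy integral, and Haussler's measure-free covering estimate $\log N(\delta, \sA, \rho) \le C\,\VC(\sA)\,(1 + \log(1/\delta))$ together yield the stated bound $c\sqrt{\VC(\sA)/n}$, and you rightly single out Haussler's bound as the step that removes the spurious $\sqrt{\log n}$ that the crude Sauer--Shelah union-bound argument would leave behind. The paper itself offers no proof of this theorem---it is quoted directly from Devroye and Lugosi~\cite{comb-methods}---and your symmetrization--chaining--Haussler route is essentially the standard argument underlying that cited result, so there is nothing further to reconcile.
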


\begin{rem}
  The quantity $\sup_{A \in \sA} |\mu(A) - \nu(A)|$ in \thmref{tv-emp}
  is precisely equal to $\TV(\mu, \nu)$ if $\sA$ is the Borel
  $\sigma$-algebra on $\sX$.
\end{rem}

\begin{corr}\corrlabel{minimum-distance-risk}
  Let $\sF, \sA_\Theta, f_{n, \theta}, f, \mu, \mu_n$ be as in
  \thmref{minimum-distance-risk}. Then, there is a universal constant
  $c > 0$ for which
  \[
    \sR_n(\sF) \le 3 \sup_{f \in \sF} \inf_{\theta \in \Theta} \TV(f_{n, \theta}, f) + c \sqrt{\frac{\VC(\sA_\Theta)}{n}} + \frac{3}{2n} .
  \]
\end{corr}

\section{Non-increasing densities}\seclabel{monotone}

This section is devoted to presenting a proof of the upper bound of
\thmref{monotone-result}. The lower bound is proved in
\appref{monotone-lower} using a careful but standard application of
Assouad's Lemma~\cite{assouad}. Part of our analysis in proving
\thmref{monotone-result} will involve the development of an explicit
efficient estimator for a density in $\sF_k$.

\subsection{A greedy tree-based estimator}\seclabel{greedy}

Suppose that $k$ is a power of two. This assumption can only, at
worst, smudge some constant factors in the final minimax rate. Using
the samples $X_1, \dots, X_n \iid f \in \sF_k$, we recursively
construct a rooted ordered binary tree $\widehat{T}$ which determines
a partition of the interval $\{1, \dots, k\}$, from which we can build
a histogram estimate $\hat{f}_n$ for $f$. Specifically, let $\rho$ be
the root of $\widehat{T}$, where $I_\rho = \{1, \dots, k\}$. We say
that $\rho$ covers the interval $I_\rho$. Then, for every node $u$ in
$\widehat{T}$ covering the interval
\[
  I_u = \{a_u, a_u + 1, \dots, a_u + |I_u| - 1\} ,
\]
we first check if $|I_u| = 1$, and if so we make $u$ a leaf in
$\widehat{T}$. Otherwise, if
\begin{align*}
  I_v &= \left\{a_u, a_u + 1, \dots, a_u + |I_u|/2 - 1\right\} , \\
  I_w &= I_u \setminus I_v
\end{align*}
are the first and second halves of $I_u$, we verify the condition
\begin{align}
  |N_v - N_w| > \sqrt{N_v + N_w} , \eqlabel{greedy-splitting-rule}
\end{align}
where $N_v, N_w$ are the number of samples which fall into the
intervals $I_v, I_w$, \ie
\[
  N_z = \sum_{i = 1}^n \mathbf{1}\{X_i \in I_z\}, \qquad \text{for } z \in \{v, w\} .
\]
The inequality \eqref{greedy-splitting-rule} is referred to as the
\emph{greedy splitting rule}. If \eqref{greedy-splitting-rule} is
satisfied, then create nodes $v, w$ covering $I_v$ and $I_w$
respectively, and add them to $\widehat{T}$ as left and right children
of $u$. If not, make $u$ a leaf in $\widehat{T}$.

\begin{figure}
  \centering
  \includegraphics[width=0.85\textwidth]{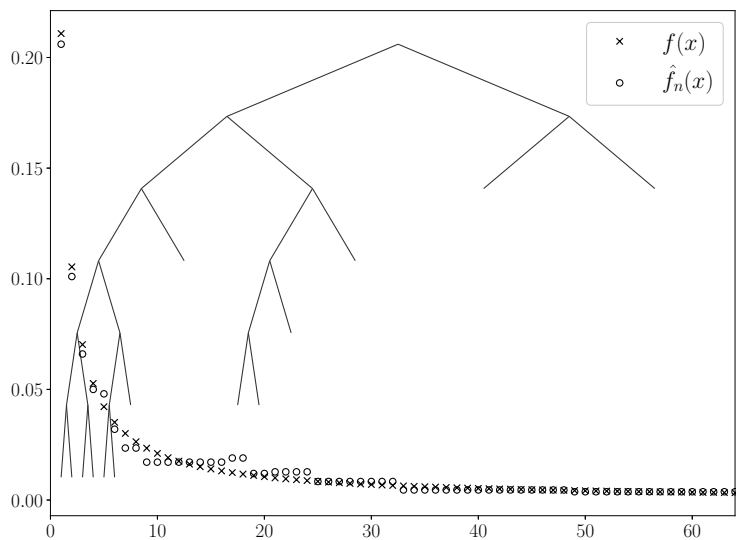}
  \caption{One sample of $\hat{f}_n$ for $n = 1000$ and $k = 64$,
    where $f(x) = 1/(x H_k)$ for $H_k$ the $k$-th Harmonic number. The
    tree $\widehat{T}$ is overlayed.}
  \figlabel{greedy}
\end{figure}
After applying this procedure, one obtains a (random) tree
$\widehat{T}$ with leaves $\widehat{L}$, and the set
$\{I_u \colon u \in \widehat{L}\}$ forms a partition of the support
$\{1, \dots, k\}$. Let $\hat{f}_n$ be the histogram estimate based on
this partition, \ie
\[
  \hat{f}_n(x) = \frac{N_u}{n |I_u|} , \qquad \text{if $x \in I_u, u \in \widehat{L}$.}
\]
The density estimate $\hat{f}_n$ is called the \emph{greedy tree-based
  estimator}. See \figref{greedy} for a typical plot of $\hat{f}_n$,
and a visualization of the tree $\widehat{T}$.

\begin{rem}
Intuitively, we justify the rule \eqref{greedy-splitting-rule} as
follows: We expect that $N_v$ is at least as large as $N_w$ by
monotonicity of the density $f$, and the larger the difference
$|N_v - N_w|$, the finer a partition around $I_v$ and $I_w$ should be
to minimize the error of a piecewise constant estimate of
$f$. However, even if $N_v$ and $N_w$ were equal in expectation, we
expect with positive probability that $N_v$ may deviate from $N_w$ on
the order of a standard deviation, \ie on the order of
$\sqrt{N_v + N_w}$, and this determines the threshold for splitting.
\end{rem}

\begin{rem}\remlabel{birge-op}
  One could argue that any good estimate of a non-increasing density
  should itself be non-increasing, and the estimate $\hat{f}_n$ does
  not have this property. This can be rectified using a method of
  Birg\'{e}~\cite{birge-risk}, who described a transformation of
  piecewise-constant density estimates which does not increase risk
  with respect to non-increasing densities. Specifically, suppose that
  the estimate $\hat{f}_n$ is not non-increasing. Then, there are
  consecutive intervals $I_v$, $I_w$ such that $\hat{f}_n$ has
  constant value $y_v$ on $I_v$ and $y_w$ on $I_w$, and $y_v <
  y_w$. Let the transformed estimate be constant on $I_v \cup I_w$,
  with value
  \[
    \frac{y_v |I_v| + y_w |I_w|}{|I_v| + |I_w|} ,
  \]
  \ie the average value of $\hat{f}_n$ on $I_v \cup I_w$. Iterate the
  above transformation until a non-increasing estimate is obtained. It
  can be proven that this results in a unique estimate $\hat{f}_n'$,
  regardless of the order of merged intervals, and that
  \[
    \TV(\hat{f}_n', f) \le \TV(\hat{f}_n, f) .
  \]
\end{rem}

\subsection{An idealized tree-based estimator}\seclabel{ideal}

\begin{figure}
  \centering
  \includegraphics[width=0.85\textwidth]{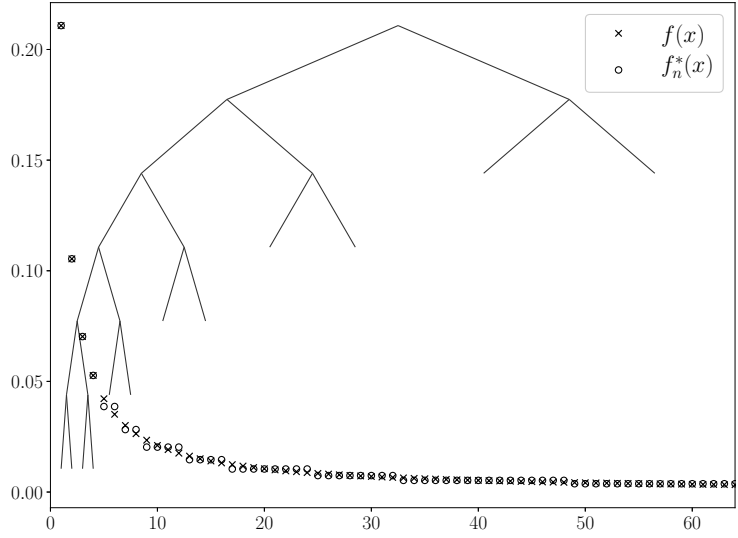}
  \caption{A plot of $f^*_n$ for $n = 1000$ and $k = 64$, where
    $f(x) = 1/(x H_k)$. The tree $T^*$ is overlayed.}
  \figlabel{ideal}
\end{figure}
Instead of analyzing the greedy tree-based estimator $\hat{f}_n$ of
the preceding section, we fully analyze an idealized version.
Indeed, in \eqref{greedy-splitting-rule}, the quantities $N_z$ are
distributed as $\Bin(n, f_z)$ for $z \in \{v, w\}$, where we define
\[
  f_z = \sum_{x \in I_z} f(x) .
\]
If we replace the quantities in \eqref{greedy-splitting-rule} with
their expectations, we obtain the \emph{idealized splitting rule}
\begin{align}
  f_v - f_w > \sqrt{\frac{f_v + f_w}{n}} , \eqlabel{id-splitting-rule}
\end{align}
where we note that $f_v \ge f_w$, since $f$ is non-increasing. Using
the same procedure as in the preceding section, replacing the
splitting rule with \eqref{id-splitting-rule}, we obtain a
deterministic tree $T^* = T^*(f)$ with leaves $L^*$, and we set
\[
  f^*_n(x) = \bar{f}_u = \frac{f_u}{|I_u|} , \qquad \text{if } x \in I_u, \, u \in L^* ,
\]
\ie $f^*_n$ is constant and equal to the average value of $f$ on each
interval $I_u$ for $u \in L^*$. We call $f^*_n$ the \emph{idealized
  tree-based estimate}. See \figref{ideal} for a visualization of
$f^*_n$ and $T^*$. It may be instructive to compare \figref{ideal} to
\figref{greedy}.

Of course, $T^*$ and $f^*_n$ both depend
intimately upon knowledge of the density $f$; in practice, we only
have access to the samples $X_1, \dots, X_n \iid f$, and the density
$f$ itself is unknown. In particular, we cannot practically use
$f^*_n$ as an estimate for unknown $f$. Importantly, as we will soon
show, we can still use $f^*_n$ along with
\corrref{minimum-distance-risk} to get a minimax rate upper bound for
$\sF_k$.

\begin{prop}\proplabel{id-tv}
  \[
    \TV(f^*_n, f) \le \frac{5}{2} \sqrt{\frac{|\{u \in L^* \colon |I_u| > 1\} |}{n}} .
  \]
\end{prop}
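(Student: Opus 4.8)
We bound $\TV(f^*_n, f) = \tfrac12 \|f^*_n - f\|_1 = \tfrac12 \sum_{u \in L^*} \sum_{x \in I_u} |f(x) - \bar f_u|$ interval by interval. A leaf $u$ with $|I_u| = 1$ contributes nothing, since there $f^*_n(x) = \bar f_u = f(x)$. For a leaf $u$ with $|I_u| > 1$ (so $|I_u|$ is even, as $k$ is a power of two), the idealized splitting rule \eqref{id-splitting-rule} failed at $u$, which means precisely that $f_v - f_w \le \sqrt{(f_v + f_w)/n} = \sqrt{f_u/n}$, where $I_v$ and $I_w$ are the left and right halves of $I_u$. So it suffices to control $\sum_{x \in I_u} |f(x) - \bar f_u|$ by a constant multiple of $f_v - f_w$.

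The key step, and the only one that is not bookkeeping, is the following deterministic inequality: if $f$ is non-increasing on an interval $I = I_v \cup I_w$ of even length with left half $I_v$, right half $I_w$, and average $\bar f_I = f_I / |I|$, then $\sum_{x \in I} |f(x) - \bar f_I| \le 4(f_v - f_w)$. I would prove this in three moves. First, since $\sum_{x \in I}(f(x) - \bar f_I) = 0$, the positive and negative parts of $f - \bar f_I$ over $I$ have equal sum, so $\sum_{x \in I} |f(x) - \bar f_I| = 2 \sum_{x \in I}(f(x) - \bar f_I)_+$. Second, pairing each $x \in I_v$ with the point $x + |I_v| \in I_w$ (a bijection $I_v \to I_w$), monotonicity gives $f(x + |I_v|) \le f(x)$, hence $(f(x+|I_v|) - \bar f_I)_+ \le (f(x) - \bar f_I)_+$, and summing yields $\sum_{x \in I}(f(x) - \bar f_I)_+ \le 2 \sum_{x \in I_v}(f(x) - \bar f_I)_+$. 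Third, let $\bar f_w = f_w/|I_w|$ be the average of the right half: then $\bar f_w \le \bar f_I$ (because $f_v \ge f_w$) and $\bar f_w \le f(y)$ for every $y \in I_v$ (each value on $I_w$ is at most the last, smallest value on $I_v$, by monotonicity), so replacing $\bar f_I$ by $\bar f_w$ only increases each summand and removes the truncation, giving $\sum_{x \in I_v}(f(x) - \bar f_I)_+ \le \sum_{x \in I_v}(f(x) - \bar f_w) = f_v - |I_v|\,\bar f_w = f_v - f_w$. Chaining the three bounds gives the claimed inequality.

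With the lemma in hand, summing over leaves with $|I_u| > 1$ gives $\|f^*_n - f\|_1 \le 4 \sum_u (f_v - f_w) \le 4 \sum_u \sqrt{f_u/n}$, and Cauchy–Schwarz together with $\sum_{u \in L^*} f_u = 1$ gives $\sum_u \sqrt{f_u/n} \le \sqrt{M/n}$, where $M = |\{u \in L^* : |I_u| > 1\}|$; hence $\TV(f^*_n, f) \le 2\sqrt{M/n}$, which is already stronger than what is stated. (A more careful case split in the lemma, according to whether $\{x : f(x) \ge \bar f_I\}$ lies inside $I_v$, improves the constant from $4$ to $2$ and the final bound to $\sqrt{M/n}$; the cruder constant suffices here.)

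The main obstacle is the lemma itself: one must upgrade a single coarse comparison — the total mass $f_v$ of the left half versus the total mass $f_w$ of the right half — into control of the entire within-interval fluctuation of $f$. The tempting alternative of refining $I_u$ dyadically down to singletons and summing a worst-case per-level error estimate loses a factor of $|I_u|$ and is worthless; monotonicity is exactly what rescues the argument, both through the pairing step above and, in the final line, through the fact that the leaf intervals are consecutive so that their masses sum to one, making the Cauchy–Schwarz bound efficient.
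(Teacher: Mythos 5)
Your proof is correct, and in fact gives a slightly better constant: your leaf-level bound $A_u \le 4(f_v - f_w)$ leads to $\TV(f^*_n, f) \le 2\sqrt{M/n}$ with $M = |\{u \in L^* : |I_u| > 1\}|$, which implies the stated bound. The overall skeleton is the same as the paper's — reduce to a per-leaf estimate $A_u \le C(f_v - f_w)$, invoke the failure of the splitting rule \eqref{id-splitting-rule} at non-singleton leaves to get $A_u \le C\sqrt{f_u/n}$, and finish with Cauchy--Schwarz using $\sum_u f_u \le 1$ — but the key inequality is proved by a genuinely different route. The paper uses the triangle inequality to write $A_u \le (f_v - f_w) + B_v + B_w$, where $B_v, B_w$ are the $L_1$ deviations of $f$ from its averages $\bar f_v, \bar f_w$ on the two halves, and then bounds each of $B_v, B_w$ by $2(f_v - f_w)$ via a crossing-point argument, arriving at $A_u \le 5(f_v - f_w)$. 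You instead work directly with the deviation from the global leaf average: symmetrize to the positive part, pair each $x \in I_v$ with $x + |I_v| \in I_w$ to reduce to a sum over the left half, and then replace $\bar f_u$ by the right-half average $\bar f_w$ (justified since $\bar f_w \le \bar f_u$ and $\bar f_w \le f(x)$ on $I_v$) to remove the truncation, giving $A_u \le 4(f_v - f_w)$ in one pass. Both arguments use monotonicity in the same localized way, but yours avoids introducing the half-interval averages and the auxiliary crossing points $x_v, x_w$, which makes it marginally cleaner and sharper; your parenthetical claim that a finer case split would improve the leaf constant to $2$ is unproven but also unnecessary, as you note.
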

\begin{proof}
  Writing out the TV-distance explicitly, we have
  \begin{align*}
    \TV(f^*_n, f) &= \frac{1}{2} \sum_{x \in \{1, \dots, k\}} |f^*_n(x) - f(x)| \\
                  &= \frac{1}{2} \sum_{u \in L^*} \sum_{x \in I_u} |f^*_n(x) - f(x)| \\
                  &= \frac{1}{2} \sum_{u \in L^*} \sum_{x \in I_u} | \bar{f}_u - f(x) | .
  \end{align*}
  \begin{figure} 
    \centering
    \includegraphics{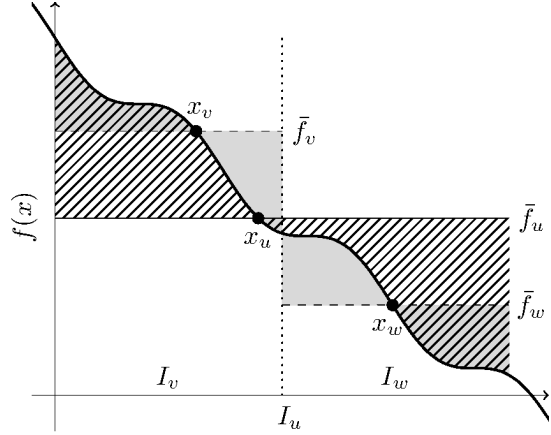}
    \caption{A visualization of the $L_1$ distance between $f^*_n$ and
      $f$ on $I_u$.}
    \figlabel{biasarea}
  \end{figure}
  
  Let $u \in L^*$, and define
  $A_u = \sum_{x \in I_u} |\bar{f}_u - f(x)|$. If $|I_u| = 1$, then
  $A_u = 0$, so assume that $|I_u| > 1$. In this case, let $I_v$ and
  $I_w$ be the left and right halves of the interval $I_u$, and let
  $\bar{f}_v$ and $\bar{f}_w$ be the average value of $f$ on $I_v$ and
  $I_w$ respectively. Write also
  \[
    B_v = \sum_{x \in I_v} |\bar{f}_v - f(x)| , \qquad   B_w = \sum_{x \in I_w} |\bar{f}_w - f(x)| .
  \]
  Refer to \figref{biasarea}. We view $A_u$ as the positive area
  between the curve $f$ and the line $\bar{f}_u$; in the figure, this
  is the patterned area. Then, $B_v$ is the positive area between $f$
  and $\bar{f}_v$ on $I_v$, which is represented as the gray area on
  $I_v$ in \figref{biasarea}, and $B_w$ is the positive area between
  $f$ and $\bar{f}_w$ on $I_w$, the gray area on $I_w$ in
  \figref{biasarea}. For $z \in \{v, u, w\}$, let $x_z$ be the largest
  point in $I_z$ for which $f(x_z) \ge \bar{f}_z$.  By the triangle
  inequality,
  \begin{align*}
    A_u &\le (\bar{f}_v - \bar{f}_u) |I_v| + (\bar{f}_u - \bar{f}_w) |I_w| + B_v + B_w \\
        &= (f_v - f_w) + B_v + B_w .
  \end{align*}
  Furthermore,
  \begin{align*}
    B_v &= \sum_{x \in I_v, \, x \le x_v} (f(x) - \bar{f}_v) + \sum_{x \in I_v, \, x > x_v} (\bar{f}_v - f(x)) \\
        &= 2 \sum_{x \in I_v, \, x > x_v} (\bar{f}_v - f(x)) \\
        &\le 2 |I_v| (\bar{f}_v - \bar{f}_w) \\
        &= 2 (f_v - f_w) ,
  \end{align*}
  where the second equality follows by the choice of $x_v$. A similar
  relation holds for $B_w$, whence
  \[
    A_u \le 5 (f_v - f_w) \le 5 \sqrt{f_u/n} ,
  \]
  where this last inequality follows from the splitting rule
  \eqref{id-splitting-rule}, since $u \in L^*$ and $|I_u| > 1$. So,
  \begin{align*}
    \TV(f^*_n, f) &= \frac{1}{2} \sum_{u \in L^*} A_u \\
                  &\le \frac{5}{2 \sqrt{n}} \sum_{u \in L^* \colon |I_u| > 1} \sqrt{f_u} \\
                  &\le \frac{5}{2} \sqrt{\frac{|\{u \in L^* \colon |I_u| > 1\}|}{n}} ,
  \end{align*}
  by the Cauchy-Schwarz inequality.
\end{proof}

\begin{prop}\proplabel{id-num-leaves}
  If $n \ge 64$ and $2 n^{1/3} \le k < n^{1/3} 2^n$, then
  \[
    |L^*| \le 12 n^{1/3} {\left( \log_2 (k/n^{1/3}) \right)}^{2/3} .
  \]
\end{prop}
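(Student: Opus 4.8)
The plan is to count the leaves of $T^*$ by their depth. Since $T^*$ is a binary tree in which every internal node has exactly two children, $|L^*| = 1 + \sum_{j \ge 0} m_j$, where $m_j$ denotes the number of internal (non-leaf) nodes of $T^*$ at depth $j$. As the depth-$j$ nodes cover $\{1,\dots,k\}$ by at most $2^j$ intervals, we always have $m_j \le 2^j$. Everything rests on a second, much stronger bound on $m_j$ extracted from the idealized splitting rule \eqref{id-splitting-rule} together with monotonicity of $f$.

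Fix a depth $j$ and list the depth-$j$ internal nodes from left to right as $u_1,\dots,u_{m_j}$. Let $c_\ell$ be the $f$-mass of the left half of $I_{u_\ell}$ and $d_\ell$ that of its right half, so $f_{u_\ell}=c_\ell+d_\ell$. Since $f$ is non-increasing and the left half of $u_\ell$, the right half of $u_\ell$, and the left half of $u_{\ell+1}$ are consecutive blocks of equal length, we get $c_1 \ge d_1 \ge c_2 \ge d_2 \ge \cdots \ge c_{m_j} \ge d_{m_j} \ge 0$. The idealized splitting rule at $u_\ell$ reads $c_\ell - d_\ell > \sqrt{(c_\ell+d_\ell)/n}$, and combining with $c_\ell \ge d_\ell \ge c_{\ell+1}$ gives the recursion $c_\ell - c_{\ell+1} > \sqrt{2c_{\ell+1}/n}$ for $1 \le \ell < m_j$, together with $\sum_\ell c_\ell \le \sum_\ell f_{u_\ell} \le 1$ (disjoint intervals) and $c_{m_j} > 1/n$. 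I would then bound the length of any decreasing sequence obeying these constraints: group the indices into dyadic blocks $B_i = \{\ell : 2^{-i-1} < c_\ell \le 2^{-i}\}$; the recursion caps $|B_i| \le \tfrac12\sqrt{n}\,2^{-i/2}+1$, the mass budget caps $\sum_i |B_i|2^{-i-1}\le 1$, and $B_i = \varnothing$ once $2^{-i}\le 1/n$; solving this small optimization yields $m_j = O(n^{1/3})$ (the constant and the role of $n\ge 64$ come from controlling the $+1$'s and the discreteness error in the block count). The same computation with $\sum_\ell c_\ell \le M$ gives the sharper $m_j = O((nM)^{1/3})$, where $M$ should be taken to be the $f$-mass covered by depth-$j$ internal intervals; this mass decreases in $j$, its complement being precisely the mass already absorbed into leaves at depth $\le j$, whose total is at most $1$.

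To assemble the estimate: the "full" regime $2^j \le n^{1/3}$ contributes $\sum_{2^j\le n^{1/3}} 2^j = O(n^{1/3})$ leaves via $m_j \le 2^j$. For the remaining depths — there are $\log_2 k - \tfrac13\log_2 n + O(1) = \log_2(k/n^{1/3}) + O(1)$ of them — I would feed the refined bound $m_j = O((nM_j)^{1/3})$, with $M_j$ the available mass at depth $j$, into Hölder's inequality across these levels, using that the decay of $M_j$ with $j$ (mass absorbed into leaves telescopes to at most $1$, and mass along root-to-leaf chains is weighted by depth) prevents $M_j$ from staying near $1$ for too long; this is what converts $\sum_j m_j$ into $O\bigl(n^{1/3}(\log_2(k/n^{1/3}))^{2/3}\bigr)$, and one then tracks absolute constants to come in under $12$.

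The step I expect to be the genuine obstacle is exactly this last assembly. The crude combination $m_j \le \min(2^j,\,C n^{1/3})$ only yields $O\bigl(n^{1/3}\log_2(k/n^{1/3})\bigr)$ — a full logarithmic factor too large — so the proof must really exploit that the portion of $\{1,\dots,k\}$ not yet cut into leaves shrinks with depth, and convert that decay efficiently (through a convexity/Hölder argument, or an explicit geometric accounting of how quickly the "full" part of the tree must narrow) into the $2/3$ exponent. Making the resulting constants small enough to certify the stated "$12$" will also require care, particularly in the dyadic-block counting that produces the $O(n^{1/3})$ per-level bound.
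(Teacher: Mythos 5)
There is a genuine gap, and it sits exactly where you flag it: the assembly across levels. Your per-level work is fine and is essentially the paper's own mechanism (the splitting rule \eqref{id-splitting-rule} plus monotonicity forces the masses, read right to left along a level, to grow like $i^2/n$, hence a count of order $(n\cdot\mathrm{mass})^{1/3}$ per level, and the shallow levels are handled by the trivial $2^j$ bound). But the quantity you propose to feed into H\"older, $M_j$ = the $f$-mass covered by depth-$j$ \emph{internal} nodes, cannot close the argument: these masses are nested, not disjoint, so $\sum_j M_j$ is not $O(1)$ but can be of order the height of the tree. Your heuristic that the decay of $M_j$ ``prevents $M_j$ from staying near $1$ for too long'' is false: if $f$ puts nearly all its mass on the atom $1$, the tree splits all the way down the left spine and the leftmost internal node at every depth carries mass close to $1$, so $M_j\approx 1$ for all $j\le\log_2 k$. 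In that situation H\"older with $\sum_j M_j\lesssim\log_2 k$ only reproduces the $n^{1/3}\log_2(k/n^{1/3})$ bound you already identified as a log factor too weak, and no amount of constant-tracking repairs it.

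The missing idea, which is how the paper gets the $2/3$ exponent, is to charge each level not to the mass of its internal nodes but to the mass of its own \emph{leaves}. Every leaf at level $j$ is a child of a depth-$(j-1)$ node that split, so the same recursion applies to the full list of children of such nodes, ordered right to left: the $2i$-th child has mass at least $i^2/n$. Since all depth-$j$ intervals have equal width and $f$ is non-increasing, the level-$j$ leaves (a subsequence of that list, lying weakly to the left) satisfy $q_j\ge\lfloor|L_j|/2\rfloor^3/(3n)$, i.e.\ $|L_j|\le 2+3(nq_j)^{1/3}$, where $q_j$ is the total mass of the level-$j$ leaves. The $q_j$ are masses of pairwise disjoint intervals, so $\sum_j q_j\le 1$, and H\"older over the roughly $\log_2(k/n^{1/3})$ deep levels then yields $\sum_j(nq_j)^{1/3}\le n^{1/3}\bigl(\log_2(k/n^{1/3})\bigr)^{2/3}$ up to constants; note that in the left-spine example this is consistent because $q_j$ is tiny at every level even though $M_j\approx 1$. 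Without switching from internal-node mass to leaf mass (or some equivalent disjointification), your outline does not deliver the stated bound, let alone the constant $12$.
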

\begin{proof}
  Note that $T^*$ has height at most $\log_2 k$. Let $U_j$ be the set
  of nodes at depth $j - 1$ in $T^*$ which have at least one leaf as a
  child, for $1 \le j \le \log_2 k$, and label the children of the
  nodes in $U_j$ in order of appeareance from right to left in $T^*$
  as $u_1, u_2, \dots, u_{2 |U_j|}$. Since none of the nodes in $U_j$
  are themselves leaves, then by \eqref{id-splitting-rule},
  \[
    f_{u_2} - f_{u_1} > \sqrt{\frac{f_{u_1} + f_{u_2}}{n} } ,
  \]
  and in particular since $f_{u_1} \ge 0$, then
  $f_{u_2} > \sqrt{f_{u_2}/n},$ so that $f_{u_2} > 1/n$. In general,
  \[
    f_{u_{2i}} > f_{u_{2i - 2}} + \sqrt{ \frac{2 f_{u_{2i - 2}}}{n} }, 
  \]
  and this recurrence relation can be solved to obtain that
  \begin{align}
    f_{u_{2i}} \ge \frac{i^2}{n} . \eqlabel{monotone-leaf-prob}
  \end{align}
  Let $L_j$ be the set of leaves at level $j$ in $T^*$. The leaves at
  level $j$ in order from right to left form a subsequence
  $v_1, v_2, \dots, v_{|L_j|}$ of $u_1, u_2, \dots, u_{2|U_j|}$. Write
  $q_j$ for the total probability mass of $f$ held in the leaves
  $L_j$, \ie
  \[
    q_j = \sum_{v \in L_j} f_v = \sum_{i = 1}^{|L_j|} f_{v_i} .
  \]
  By \eqref{monotone-leaf-prob} and since $f_{v_i} \ge f_{u_i}$ for
  each $i$,
  \begin{align*}
    \sum_{i = 1}^{|L_j|} f_{v_i} \ge \sum_{i = 1}^{\floor{|L_j|/2}} f_{u_{2i}} 
                                 \ge \sum_{i = 1}^{\floor{|L_j|/2}} \frac{i^2}{n} 
                                 \ge \frac{(\floor{|L_j|/2})^3}{3 n} ,
  \end{align*}
  so that
  \[
    |L_j| \le 2 + 2(3 n q_j)^{1/3} \le 2 + 3 (n q_j)^{1/3} .
  \]
  Summing over all leaves and using the facts that $n \ge 64$ and
  $2n^{1/3} \le k < n^{1/3} 2^n$,
  \begin{align*}
    |L^*| &= \sum_{j = 0}^{\floor{(1/3) \log_2 n} - 1} |L_j| + \sum_{j = \floor{(1/3) \log_2 n}}^{\log_2 k} |L_j| \\
          &\le n^{1/3} + \sum_{j = \floor{(1/3) \log_2 n}}^{\log_2 k} (2 + 3 (n q_j)^{1/3}) \\
          &\le n^{1/3} + 4 \log_2 (k/n^{1/3}) + 3 n^{1/3} \sum_{j = \floor{(1/3) \log_2 n}}^{\log_2 k} q_{j}^{1/3} .
  \end{align*}
  By H\"{o}lder's inequality,
  \begin{align*}
    \sum_{j = \floor{(1/3) \log_2 n}}^{\log_2 k} q_{j}^{1/3} &\le {\left( \sum_{j = 1}^{\log_2 k} q_j \right)}^{1/3} {\left( \sum_{j = \floor{(1/3) \log_2 n}}^{\log_2 k} 1 \right)}^{2/3} \\
                                                                  &\le {\left(3 \log_2 (k/n^{1/3}) \right)}^{2/3} , 
  \end{align*}
  so finally
  \begin{align*}
    |L^*| &\le n^{1/3} + 4 \log_2 (k/n^{1/3}) + 7 n^{1/3} {\left( \log_2 (k/n^{1/3}) \right)}^{2/3} \\
        &\le 12 n^{1/3} {\left( \log_2 (k/n^{1/3}) \right)}^{2/3} . \qedhere
  \end{align*}
\end{proof}

\begin{proof}[Proof of the upper bound in \thmref{monotone-result}]
  The case $k \ge n^{1/3} 2^n$ is trivial, and follows simply because
  the TV-distance is always upper bounded by $1$.

  Suppose next that $2 n^{1/3} > k$. In this regime, we can use a
  histogram estimator for $f$ with bins of size $1$ for each element
  of $\{1, \dots, k\}$. It is well known that risk of this estimator
  is on the order of $\sqrt{k/n}$~\cite{comb-methods}.

  Finally, suppose that $2 n^{1/3} \le k < n^{1/3} 2^n$. Let
  $\sF_\Theta$ be the class of all piecewise-constant probability
  densities on $\{1, \dots, k\}$ which have $\ell = |L^*|$ parts; in
  particular, $f^*_n \in \sF_\Theta$. Let $\sA_\Theta$ be the Yatracos
  class of $\sF_\Theta$,
  \[
    \sA_\Theta = \Bigl\{ \{x \colon f(x) > g(x)\} \colon f \neq g \in \sF_\Theta\Bigr\} .
  \]
  Then, $\sA_\Theta \subseteq \sA$, where $\sA$ is the class of all
  unions of at most $\ell$ intervals in $\N$. It is well known that
  $\VC(\sA) = 2\ell$, so $\VC(\sA_\Theta) \le 2\ell$. By
  \corrref{minimum-distance-risk} and \propref{id-tv}, there are
  universal constants $c_1, c_2 > 0$ for which
  \begin{align*}
    \sR_n(\sF_k) &\le 3 \sup_{f \in \sF_k} \inf_{\theta \in \Theta} \TV(f_{n, \theta}, f) + c_1 \sqrt{\ell/n} \\
                 &\le 3 \sup_{f \in \sF_k} \TV(f^*_n, f) + c_1 \sqrt{\ell/n} \\
                 &\le c_2 \sqrt{\ell/n} .
  \end{align*}
  By \propref{id-num-leaves}, we see that for sufficiently large $n$,
  there is a universal constant $c_3 > 0$ such that
  \[
    \sR_n(\sF_k) \le c_3 {\left( \frac{\log (k/n^{1/3})}{n} \right)}^{1/3} . \qedhere
  \]
\end{proof}

\begin{rem}\remlabel{cont-monotone}
  Fix $B > 0$ and let $\sF_B'$ be the class of all non-increasing
  densities supported on $[0, 1]$ and bounded from above by $B$. Our
  method can be applied to prove a minimax rate upper bound
  $\sF_B'$. Now, the tree $T^*$ underlying the idealized tree-based
  estimator is truncated at some given level, say $m$ to be specified,
  and the idealized estimator should take on the average value of the
  true density $f$ on the truncated leaves. Write $d(u)$ for the depth
  of the node $u$ in $T^*$. As in \propref{id-tv},
  \begin{align*}
    &\TV(f_n^*, f) = \frac{1}{2} \sum_{u \in L^*} A_u \\
    & \qquad\le \frac{5}{2 \sqrt{n}} \sum_{u \in L^*, \, d(u) < m} \sqrt{f_u} + \frac{1}{2} \sum_{u \in L^*, \, d(u) = m} A_u .
  \end{align*}
  The argument of \propref{id-num-leaves} allows us to control the
  first sum, so that for some universal constant $c_1 > 0$,
  \[
    \frac{5}{2\sqrt{n}} \sum_{u \in L^*, \, d(u) < m} \sqrt{f_u} \le c_1 {\left( \frac{m}{n}\right)}^{1/3} .
  \]
  On the other hand, since $A_u \le 5(f_v - f_w)$ for $v$ the left
  child and $w$ the right child of $u$, then
  \[
    \frac{1}{2} \sum_{u \in L^*, \, d(u) = m} A_u \le \frac{5 B}{2^{m + 2}} .
  \]
  An optimal choice of $m$ has that for a universal constant
  $c_2 > 0$,
  \[
    \TV(f_n^*, f) \le c_2 {\left(\frac{\log_2 (B n^{1/3})}{n}\right)}^{1/3} .
  \]
  From here, using the same method as in the proof of
  \thmref{monotone-result}, it follows that for some universal
  $c_3 > 0$,
  \[
    \sR_n(\sF_B') \le c_3 {\left(\frac{\log_2 (B
          n^{1/3})}{n}\right)}^{1/3} .
  \]
\end{rem}

\section{Non-increasing convex densities}\seclabel{convex}

Recall that $\sG_k$ is the class of non-increasing convex densities
supported on $\{1, \dots, k\}$. Then, $\sG_k$ forms a subclass of
$\sF_k$, which we considered in \secref{monotone}. This section is
devoted to extending the techniques of \secref{monotone} in order to
obtain a minimax rate upper bound on $\sG_k$. Again, the lower bound
is proved using standard techniques in \appref{convex-lower}.

In this section, we assume that $k$ is a power of three. In order to
prove the upper bound of \thmref{convex-result}, we construct a
ternary tree just as in \secref{monotone}, now with a ternary
splitting rule, where if a node $u$ has children $v, w, r$ in order
from left to right, we split and recurse if
\begin{equation}
  f_v - 2 f_w + f_r > \sqrt{\frac{f_v + f_w + f_r}{n}} . \eqlabel{convex-splitting-rule}
\end{equation}
Here we obtain a tree $T^\dagger = T^\dagger(f)$ with leaves
$L^\dagger$. If $u \in L^\dagger$ has children $v, w, r$ from left to
right, let $m_z$ be the midpoint of $I_z$ for $z \in \{v, w, r\}$. Let
the estimate $f^\dagger_n$ on $I_u$ be the line passing through the
points $(m_v, \bar{f}_v)$ and $(m_r, \bar{f}_r)$. Again, if
$|I_u| = 1$, then $f^\dagger_n(x) = f(x)$. We refer to $f_n^\dagger$
as the \emph{idealized tree-based estimate} for $f$.
\begin{rem}
  Since $f$ is non-increasing, the operation of \remref{birge-op} can
  again by applied to $f_n^\dagger$ to obtain a non-increasing
  estimate ${f_n^\dagger}'$ for which
  \[
    \TV({f_n^\dagger}', f) \le \TV(f_n^\dagger, f) .
  \]
\end{rem}

\begin{prop}\proplabel{conv-id-tv}
  \[
    \TV(f_n^\dagger, f) \le \frac{41}{48} \sqrt{\frac{|\{u \in L^\dagger \colon |I_u| > 1\}|}{n} } .
  \]
\end{prop}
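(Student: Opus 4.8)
The plan is to follow the template of \propref{id-tv}. Write $A_u := \sum_{x \in I_u} |f_n^\dagger(x) - f(x)|$, so that $\TV(f_n^\dagger, f) = \frac12 \sum_{u \in L^\dagger} A_u$; the leaves $u$ with $|I_u| = 1$ contribute $A_u = 0$, so only the leaves with $|I_u| > 1$ matter, and for such a leaf $u$ with thirds $v, w, r$ the splitting rule \eqref{convex-splitting-rule} \emph{failed}, that is, $S_u := f_v - 2 f_w + f_r \le \sqrt{f_u/n}$, with $S_u \ge 0$ by convexity of $f$. If I can show $A_u \le \frac{41}{24} S_u$ for every such leaf, then $\TV(f_n^\dagger, f) \le \frac{41}{48\sqrt n}\sum_{u \colon |I_u| > 1}\sqrt{f_u} \le \frac{41}{48}\sqrt{|\{u \in L^\dagger \colon |I_u| > 1\}|/n}$ by Cauchy--Schwarz together with $\sum_u f_u \le 1$, which is exactly the claim. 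So everything reduces to the per-leaf inequality $A_u \le \frac{41}{24} S_u$.

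Fix such a leaf $u$ and let $L$ denote the affine function that is the estimate $f_n^\dagger$ on $I_u$, i.e.\ the line through $(m_v, \bar f_v)$ and $(m_r, \bar f_r)$. The starting point is a set of exact ``moment'' identities: since $m_z$ is the arithmetic mean of the integers of $I_z$ and $L$ is affine, $\sum_{x \in I_z} L(x) = |I_z| L(m_z) = f_z$ for $z \in \{v, r\}$, while $m_w$ is both the mean of $I_w$ and the mean of $I_u$, so $L(m_w) = \frac12(\bar f_v + \bar f_r)$ and $\sum_{x \in I_w} L(x) = \frac12(f_v + f_r)$. Hence, writing $g := L - f$,
\[
  \sum_{x \in I_v} g(x) = \sum_{x \in I_r} g(x) = 0 , \qquad \sum_{x \in I_w} g(x) = \frac12 S_u , \qquad \sum_{x \in I_u} g(x) = \frac12 S_u .
\]
Since $f$ is convex and $L$ affine, $g$ is concave, and Jensen's inequality gives $f(m_v) \le \bar f_v = L(m_v)$, $f(m_r) \le \bar f_r = L(m_r)$, while $f(m_w) \le \bar f_w \le \frac12(\bar f_v + \bar f_r) = L(m_w)$, the middle inequality being convexity of the block-averages. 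So $g \ge 0$ at the three equally spaced points $m_v, m_w, m_r$, and concavity forces $g \ge 0$ on the whole middle block $[m_v, m_r]$; moreover $\sum_{I_w} g = \frac12 S_u$ with $g \ge 0$ on $I_w$ forces $g \le \frac12 S_u$ throughout $I_w$. Writing $A_u = \sum_{x \in I_u} |g(x)| = 2 \sum_{x \in I_u} g(x)_+ - \sum_{x \in I_u} g(x) = 2 \sum_{x \in I_u} g(x)_+ - \frac12 S_u$, it remains to bound the total overshoot $\sum_{x \in I_u} g(x)_+$ by a constant multiple of $S_u$.

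I would split $\sum_x g(x)_+$ into the middle part $\sum_{m_v \le x \le m_r} g(x)$ (all terms nonnegative there) and the two end parts $\sum_{x < m_v} g(x)_+$ and $\sum_{x > m_r} g(x)_+$. The middle part is a direct computation from $\sum_{m_v \le x \le m_r} g(x) = \sum_{m_v \le x \le m_r}(L(x) - f(x))$: the sum of $L$ over this range is explicit ($L$ affine, $2m+1$ points with mean $m_w$), and the convex excess $\sum(L-f)$ over any contiguous block is a nonnegative explicitly weighted sum of the second differences $f(j) - 2f(j+1) + f(j+2)$ (equivalently, the gap in Jensen's inequality), which aggregate into $S_u$. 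For the end parts I would use concavity again: on $I_v$, $g$ is concave with $\sum_{I_v} g = 0$, so $\sum_{I_v} g(x)_+ = \frac12 \sum_{I_v} |g(x)|$ and, $g$ lying above its chord on $I_v$, the corner values satisfy $g(\min I_u) + g(\max I_v) \le 0$; combining this with $g(\max I_v) \ge 0$ (as $\max I_v \in [m_v,m_r]$) and the ceiling $g \le \frac12 S_u$ carried in from $I_w$ lets one trade the end overshoot on $I_v$ against the same second-difference budget, and $I_r$ is symmetric. Optimizing the various triangle-inequality splits and tracking constants is what produces the stated $\frac{41}{24}$, hence $\frac{41}{48}$ after Cauchy--Schwarz.

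The main obstacle is this end-segment estimate. Concavity of $g$ extrapolated \emph{outward} from $[m_v, m_r]$ only yields \emph{lower} bounds on $g$ on $I_v$ and $I_r$, so the positive part of $g$ on the end blocks cannot be controlled from concavity and the midpoint inequalities alone; one must combine them with the zero-average identities $\sum_{I_v} g = \sum_{I_r} g = 0$, the small deficit $\sum_{I_w} g = \frac12 S_u$, and the derived ceiling $g \le \frac12 S_u$ on $I_w$, and do the bookkeeping carefully enough that all the constants close up to exactly $\frac{41}{48}$. An equivalent route, trading concavity bookkeeping for boundary-value estimates, is to apply the triangle inequality through the per-third chords of $f$: their aggregate $L_1$-error is $\sum_z C_z$ with $C_z$ the chord excess on $I_z$, which is $\le c\,S_u$ by the discrete Green's-function representation of $C_z$ as a nonnegative weighted sum of second differences, and the $L_1$-distance between $L$ and that piecewise chord is again $O(S_u)$ by estimating $|L - f|$ at the block endpoints.
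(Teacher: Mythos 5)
Your overall skeleton matches the paper's: reduce to a per-leaf inequality $A_u \le \tfrac{41}{24}\,S_u$ with $S_u = f_v - 2f_w + f_r$, invoke the failed splitting rule \eqref{convex-splitting-rule} to get $S_u \le \sqrt{f_u/n}$, and finish with Cauchy--Schwarz exactly as in \propref{id-tv}; and your preparatory facts are correct (the block identities $\sum_{I_v}(f_n^\dagger - f) = \sum_{I_r}(f_n^\dagger - f) = 0$ and $\sum_{I_w}(f_n^\dagger - f) = \tfrac{1}{2}S_u$, concavity of $g = f_n^\dagger - f$, $g \ge 0$ on $[m_v, m_r]$, and $g \le \tfrac{1}{2}S_u$ on $I_w$). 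But the per-leaf inequality itself is never established, and it is the entire content of the proposition beyond the template of \propref{id-tv}. You reduce it to bounding $\sum_x g(x)_+$ by a constant multiple of $S_u$, and then, as you yourself flag, the end-segment estimate is left open, with the constant $41/24$ attributed to "optimizing the various triangle-inequality splits" that are not carried out. Even your "middle part" is not the direct computation you claim: $\sum_{m_v \le x \le m_r} f(x)$ is not determined by $f_v, f_w, f_r$ (the range contains half of each outer block), so that term already requires control of $g$ on the outer halves.

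Moreover, the ingredients you list are not sufficient as stated to close the gap. From $\sum_{I_v} g = 0$, concavity of $g$ on $I_u$, and the ceiling $g \le \tfrac{1}{2}S_u$ on $I_w$, extrapolating a secant through two points of $I_w$ leftward only gives a pointwise bound on $g$ over $I_v$ of order $S_u$, hence a bound on $\sum_{I_v} g_+$ of order $|I_v|\,S_u$, which degrades with the block length; zero mean on $I_v$ does not rescue this, since a concave $g$ of size about $S_u$ with positive and negative regions each of length about $|I_v|$ is consistent with everything you have written down. What is needed --- and what the paper supplies --- is a comparison of $f$ on the outer thirds with the secant lines through adjacent block averages: the line $g_{wr}$ through $(m_w, \bar{f}_w)$ and $(m_r, \bar{f}_r)$ crosses $f$ inside $I_w$, so $f$ lies above $g_{wr}$ on all of $I_v$, while $f_n^\dagger$ lies above $g_{wr}$ on $I_v$ as well (both lines meet at $(m_r, \bar{f}_r)$, and $f_n^\dagger$ is the larger at $m_w$); this converts $\sum_{I_v} |f_n^\dagger - f|$ into $2\sum_{I_v}(f_n^\dagger - g_{wr})$, an explicit linear combination of $f_v, f_w, f_r$, i.e.\ an explicit multiple of $S_u$, and the analogous two-chord argument handles $I_w$ and $I_r$, which is where the constant $\tfrac{41}{24}$, hence $\tfrac{41}{48}$, actually comes from. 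Without this chord-comparison step (or a completed substitute) and an actual derivation of the constant, your proposal is an outline rather than a proof; your alternative "per-third chords" route has the same status, since it only asserts unspecified $O(S_u)$ bounds.
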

Before proving this, we first note that by convexity of $f$, the slope
of the line passing through $(m_w, \bar{f}_w)$ and $(m_r, \bar{f}_r)$
is at least the slope of the line passing through $(m_v, \bar{f}_v)$
and $(m_w, \bar{f}_w)$. Equivalently,
\[
  f_r - f_w \ge f_w - f_v \iff f_v - 2 f_w + f_r \ge 0 .
\]
\begin{proof}[Proof of \propref{conv-id-tv}]
  \begin{figure} 
    \centering
    \includegraphics{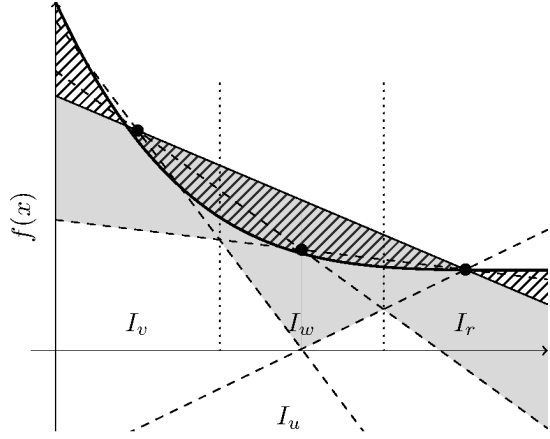}
    \caption{A visualization of the $L_1$ distance between $f^\dagger_n$ and
      $f$ on $I_u$.}
    \figlabel{convexarea}
  \end{figure}  
  As in the proof of \propref{id-tv}, we have
  \[
    \TV(f_n^\dagger, f) = \frac{1}{2} \sum_{u \in L^\dagger \colon |I_u| > 1} A_u ,
  \]
  for $A_u = \sum_{x \in I_u} |f_n^\dagger(x) - f(x)|$. We refer to
  \figref{convexarea} for a visualization of the quantity $A_u$, which
  is depicted as the patterned area.


  For $z \in \{v, w, r\}$, write
  \[
    B_z = \sum_{x \in I_z} |f^\dagger_n(x) - f(x)| .
  \]
  By convexity of $f$,
  \[
    B_w = \sum_{x \in I_w} (f^\dagger_n(x) - f(x)) .
  \]
  Observe also by convexity that $f(m_w) \ge \bar{f}_w$ and
  $f(m_r) \ge \bar{f}_r$. So, the line segment between
  $(m_w, \bar{f}_w)$ and $(m_r, \bar{f}_r)$ lies above $f$. Let
  $g_{wr} \colon \R \to \R$ be the line passing through the points
  $(m_w, \bar{f}_w)$ and $(m_r, \bar{f}_r)$. Then,
  \[
    \int_{I_w} g_{wr} = \bar{f}_w |I_w| = f_w ,
  \]
  Let $m_{wr}$ be the midpoint of $m_w$ and $m_r$, and $m_{vw}$ be the
  midpoint of $m_v$ and $m_w$. Let $x_w \in [-\infty, m_{wr}]$ be the
  leftmost point where $g_{wr}$ intersects $f$, if at all. Then,
  \[
    \int_{I_w \cap (-\infty, x_w]} (f - g_{wr}) = \int_{I_w \cap (x_w, \infty)} (g_{wr} - f),
  \]
  Since the right-hand side is non-negative, then indeed $x_w \in I_w$
  and it must be that $f$ lies above $g_{wr}$ to the left of
  $x_w$. Similarly, if $x_w' \in [m_{vw}, \infty]$ denotes the
  rightmost point where the line $g_{vw} \colon \R \to \R$ passing
  through $(m_v, \bar{f}_v)$ and $(m_w, \bar{f}_w)$ intersects $f$,
  then $x_w' \in I_w$ and $f$ lies above $g_{vw}$ to the right of
  $x_w'$. Therefore,
  \begin{align*}
    B_w &\le \int_{I_w \cap (-\infty, m_w]} (f^\dagger_n - g_{vw}) + \int_{I_w \cap (m_w, \infty)} (f^\dagger_n - g_{wr}) \\
        &= 3(f_v - 2 f_w + f_r)/8 .
  \end{align*}

  It remains to bound $B_v$ and $B_r$. Let $x_v \in I_v$ be the point
  where the line passing through $(m_v, \bar{f}_v)$ and
  $(m_r, \bar{f}_r)$ intersects $f$. As before, this points exists,
  and since $f$ is non-increasing, $x_v \le m_v$. Futhermore,
  \begin{align*}
    B_v &= \int_{I_v \cap (-\infty, x_v]} (f - f^\dagger_n) + \int_{I_v \cap (x_v, \infty)} (f^\dagger_n - f) \\
        &= 2 \int_{I_v \cap (x_v, \infty)} (f^\dagger_n - f) \\
        &\le 2 \int_{I_v} (f^\dagger_n - g_{wr}) \\
        &= 2(f_v - 2f_w + f_r)/3 ,
  \end{align*}
  where the inequality follows from convexity and earlier remarks. A
  similar argument follows for $B_r$.

  In total,
  \[
    A_u = B_v + B_w + B_r \le 41(f_v - 2 f_w + f_r)/24 .
  \]
  The result then follows from the splitting rule
  \eqref{convex-splitting-rule} and the Cauchy-Schwarz inequality.
\end{proof}

\begin{prop}
  If $n \ge 3^{10}$ and $3 n^{1/5} \le k < n^{1/5} 3^n$, then
  \[
    |L^\dagger| \le 34 n^{1/5} {\left( \log_3 (k/n^{1/5})\right)}^{4/5} .
  \]
\end{prop}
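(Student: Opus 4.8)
The plan is to follow the argument of \propref{id-num-leaves} closely, upgrading the quadratic growth estimate used there to a \emph{quartic} one which exploits convexity; this is exactly what turns the exponent $1/3$ into $1/5$. The tree $T^\dagger$ has height at most $\log_3 k$. Fix a level $j$, write $\ell = k/3^j$, and let $U_j$ be the set of nodes at depth $j - 1$ having at least one leaf child. Order the children of the nodes in $U_j$ from right to left and group them into triples $(v_i, w_i, r_i)$, $1 \le i \le |U_j|$, where $(v_i, w_i, r_i)$ are the left-to-right children of the $i$-th node of $U_j$ counting from the right. Since each node of $U_j$ is a non-leaf, every triple satisfies \eqref{convex-splitting-rule}. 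All intervals at depth $j$ are elementary of common length $\ell$, so the interval supports of distinct triples are disjoint and triple $i$ lies strictly to the left of triple $i - 1$; moreover, since $f$ is non-increasing, $f_{v_i}$ is non-decreasing in $i$, and since $f$ is convex, the sequence of averages of $f$ over consecutive elementary intervals of length $\ell$ is itself convex.

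The heart of the argument is a coupled recurrence showing $f_{v_i} \ge c\, i^4/n$ for a universal constant $c > 0$ and all $i$ above a fixed absolute constant. Write $p_i = (f_{v_i} - f_{w_i})/\ell$ and $p_i' = (f_{w_i} - f_{r_i})/\ell$ for the two discrete ``slopes'' inside triple $i$, so that $p_i - p_i' = d_i/\ell$ with $d_i = f_{v_i} - 2 f_{w_i} + f_{r_i}$. By \eqref{convex-splitting-rule} and since $f_{v_i}$ is the largest of $f_{v_i}, f_{w_i}, f_{r_i}$, we get $d_i > \sqrt{(f_{v_i} + f_{w_i} + f_{r_i})/n} \ge \sqrt{f_{v_i}/n}$. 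Convexity of the elementary-interval averages, together with the fact that triple $i$ lies to the left of triple $i - 1$, yields $p_i' \ge p_{i-1}$, hence $p_i \ge p_{i-1} + d_i/\ell$ and therefore $p_i \ge \tfrac{1}{\ell}\sum_{t=1}^i d_t$. Again by convexity, the drop of $f$ from $I_{v_i}$ to $I_{v_{i+1}}$ is at least $3\ell$ times $p_i$, so $f_{v_{i+1}} \ge f_{v_i} + 3\ell p_i \ge f_{v_i} + 3\sum_{t=1}^i d_t$, and summing from $f_{v_1} \ge 0$ gives $f_{v_i} \ge 3\sum_{m=1}^{i-1}\sum_{t=1}^m d_t \ge \tfrac{3}{\sqrt n}\sum_{m=1}^{i-1}\sum_{t=1}^m \sqrt{f_{v_t}}$. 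Substituting the ansatz $f_{v_t} \ge c\,t^4/n$ into the right-hand side and estimating the resulting double sum shows the ansatz propagates for a small enough universal $c$, establishing the quartic bound.

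From here the argument mirrors \propref{id-num-leaves}. Let $L_j$ be the leaves at depth $j$ and $q_j = \sum_{v \in L_j} f_v$. Ordering the depth-$j$ leaves from right to left exhibits them as a subsequence of the children sequence above, so by the quartic bound and monotonicity of $f$ one gets $q_j \ge c' |L_j|^5/n$ for a universal $c' > 0$, i.e.\ $|L_j| \le C_1 (n q_j)^{1/5} + C_2$ for universal constants $C_1, C_2$. Splitting $|L^\dagger| = \sum_{j=0}^{\log_3 k} |L_j|$ at $j_0 = \floor{(1/5)\log_3 n}$, the first $j_0$ terms sum to at most $\sum_{j < j_0} 3^j \le n^{1/5}$; for $j \ge j_0$ there are at most $2\log_3(k/n^{1/5})$ terms (using $k \ge 3 n^{1/5}$), whose constant parts sum to $O(\log_3(k/n^{1/5}))$ and whose main parts sum to at most $C_1 n^{1/5} \sum_{j \ge j_0} q_j^{1/5} \le C_1 n^{1/5}\bigl(\textstyle\sum_j q_j\bigr)^{1/5}\bigl(2\log_3(k/n^{1/5})\bigr)^{4/5}$ by H\"older's inequality with exponents $5$ and $5/4$; since $\sum_j q_j \le 1$ this is $O\bigl(n^{1/5}(\log_3(k/n^{1/5}))^{4/5}\bigr)$. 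Assembling the pieces, using $\log_3(k/n^{1/5}) \ge 1$ and $\log_3(k/n^{1/5}) < n$ (from $k < n^{1/5} 3^n$) to absorb the lower-order terms, and keeping careful track of constants — this is where the hypothesis $n \ge 3^{10}$ enters, playing the role of $n \ge 64$ in \propref{id-num-leaves} (it forces $j_0 \ge 2$ and places $i$ in the valid range of the quartic bound) — gives $|L^\dagger| \le 34\, n^{1/5}(\log_3(k/n^{1/5}))^{4/5}$.

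The main obstacle is the second paragraph: correctly setting up the slope quantities $p_i, p_i'$ and verifying, purely from convexity and the geometry of the disjoint ordered triples, the two key inequalities $p_i' \ge p_{i-1}$ and $f_{v_{i+1}} \ge f_{v_i} + 3\ell p_i$, and then confirming that the double-sum recurrence genuinely bootstraps to the quartic rate $f_{v_i} \gtrsim i^4/n$ with an explicit constant; everything after that is routine bookkeeping patterned on \propref{id-num-leaves}.
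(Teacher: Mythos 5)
Your outline follows the paper's own proof almost step for step: order the children of the depth-$(j-1)$ nodes having a leaf child from right to left, use the splitting rule \eqref{convex-splitting-rule} together with monotonicity and convexity to force the left-child masses to grow like $i^4/n$, deduce $|L_j| \lesssim (n q_j)^{1/5}$ for the leaves at level $j$, and finish by splitting the levels at $\lfloor (1/5)\log_3 n\rfloor$ and applying H\"older, exactly as in \propref{id-num-leaves}. Your slope formulation (the inequalities $p_i' \ge p_{i-1}$ and $f_{v_{i+1}} \ge f_{v_i} + 3\ell p_i$, both of which do follow from convexity of the equal-length interval sums) is an equivalent repackaging of the paper's recurrence $f_{u_{3i}} - f_{u_{3i-1}} > f_{u_{3(i-1)}} - f_{u_{3(i-1)-1}} + \sqrt{3 f_{u_{3(i-1)}}/n}$, so the structural content is correct.

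The shortfall is quantitative: the proposition asserts the explicit constant $34$, and as written your argument does not deliver it. You only claim $f_{v_i} \ge c\,i^4/n$ for an unspecified $c$ and for $i$ above an unspecified threshold; if you run your ansatz bootstrap through the bare double-sum recurrence $f_{v_i} \ge \tfrac{3}{\sqrt n}\sum_{m<i}\sum_{t\le m}\sqrt{f_{v_t}}$ demanding validity from $i = 2$ on, you are forced to take $c$ as small as about $1/4096$, and if instead you exclude small $i$ you inflate the additive constant in $|L_j| \le C_2 + C_1 (n q_j)^{1/5}$; either way the final coefficient lands well above $34$. The paper avoids this by proving the explicit bound $f_{u_{3i}} \ge i^4/(108 n)$ for all $i$, via an induction on the within-triple gap (showing it exceeds $i^3/(27n)$, with the cases $i = 2, 3$ checked by hand) and then summing; your recurrence can be made to give the same constant if you keep the $f_{v_{i-1}}$ term, i.e.\ $f_{v_i} \ge f_{v_{i-1}} + \tfrac{3}{\sqrt n}\sum_{t < i}\sqrt{f_{v_t}}$, and verify small $i$ directly, but that work is missing. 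A second, smaller slip: your count of ``at most $2\log_3(k/n^{1/5})$'' levels above $j_0$ fails when $3 n^{1/5} \le k < 9 n^{1/5}$, since the hypothesis only gives $\log_3(k/n^{1/5}) \ge 1$ while the number of levels is $\log_3(k/n^{1/5}) + O(1)$; you need the factor $3$ used in the paper, and this too feeds into the final constant. Neither issue is structural, but both must be repaired before the stated bound with constant $34$ follows.
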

\begin{proof}
  The tree $T^\dagger$ has height at most $\log_3 k$. Let $U_j$ be the
  set of nodes at depth $j - 1$ in $T^\dagger$ with at least one leaf
  as a child, for $1 \le j \le \log_3 k$, labelled in order of
  appearance from right to left in $T^\dagger$ as
  $u_1, u_2, \dots, u_{3 |U_j|}$.
  By the convex splitting rule \eqref{convex-splitting-rule}, and
  since $f$ is non-increasing,
  \[
    f_{u_3} - f_{u_2} > f_{u_2} - f_{u_1} + \sqrt{\frac{f_{u_1} + f_{u_2} +
        f_{u_3}}{n}} \ge \sqrt{\frac{f_{u_3} - f_{u_2}}{n}} ,
  \]
  so in particular, $f_{u_3} - f_{u_2} > 1/n$, and $f_{u_3} > 1/n$. In
  general,
  \begin{align}
    f_{u_{3i}} - f_{u_{3i - 1}} &> f_{u_{3(i - 1)}} - f_{u_{3(i - 1) - 1}} + \sqrt{ \frac{3 f_{u_{3(i - 1)}}}{n}} \notag \\
                                &\ge f_{u_{3(i - 1)}} - f_{u_{3(i - 1) - 1}} + \sqrt{\frac{3 \sum_{j = 1}^{i - 1} (f_{u_{3j}} - f_{u_{3j - 1}})}{n}} . \eqlabel{concave-induction}
  \end{align}
  We claim now that $f_{u_{3i}} - f_{u_{3i - 1}} > \frac{i^3}{27n}$,
  which we prove by induction; the base case is shown above, and by
  the induction hypothesis,
  \begin{align*}
    \eqref{concave-induction} &\ge \frac{(i - 1)^3}{27n} + \sqrt{\frac{3 \sum_{j = 1}^{i - 1} (j^3/27n)}{n}} \\
                              &\ge \frac{(i - 1)^3}{27n} + \frac{(i - 1)^2}{6n} \\
                              &\ge \frac{i^3}{27n} ,
  \end{align*}
  for all $i \ge 4$, while the cases $i = 2, 3$ can be manually
  verified. Then, by monotonicity of $f$,
  \begin{align}
    f_{u_{3i}} &> \frac{i^3}{27n} + f_{u_{3i - 1}} \notag \\
               &\ge \frac{i^3}{27n} + f_{u_{3(i - 1)}} \notag \\
               &\ge \sum_{j = 1}^i \frac{j^3}{27n} \notag \\
               &\ge \frac{i^4}{108 n} . \eqlabel{concave-prob-lower}
  \end{align}
  Let now $L_j$ be the set of leaves at level $j$ in $T^\dagger$. The
  leaves at level $j$ in order from right to left form a subsequence
  $v_1, \dots, v_{|L_{j}|}$ of $u_1, \dots, u_{3|U_j|}$. Let $q_j$ be
  the total probability mass of $f$ held in the leaves $L_j$. By
  \eqref{concave-prob-lower} and since $f_{v_i} \ge f_{u_i}$ for each
  $i$,
  \begin{align*}
    q_j \ge \sum_{i = 1}^{\floor{|L_j|/3}} f_{u_{3i}} \ge \sum_{i = 1}^{\floor{|L_j|/3}} \frac{i^4}{108n} \ge \frac{(\floor{|L_j|/3})^5}{540 n} ,
  \end{align*}
  so that
  \[
    |L_j| \le 3 + 3 (540 n q_j)^{1/5} \le 3 + 11 (n q_j)^{1/5} .
  \]
  Summing over all leaves,
  \begin{align*}
    |L^\dagger|  &\le n^{1/5} + \sum_{j = \floor{(1/5) \log_3 n}}^{\log_3 k} |L_j| \\
                 &\le n^{1/5} + 6 \log_3 (k/n^{1/5}) + 11 n^{1/5} \sum_{j = \floor{(1/5) \log_3 n}}^{\log_3 k} q_j^{1/5} .
  \end{align*}
  By H\"{o}lder's inequality,
  \begin{align*}
    \sum_{j = \floor{(1/5) \log_3 n}}^{\log_3 k} q_j^{1/5} &\le {\left( \sum_{j = 0}^{\log_3 k} q_j \right)}^{1/5} {\left( \sum_{j = \floor{(1/5) \log_3 k}}^{\log_3 k} 1 \right)}^{4/5} \\
                                                               &\le {\left(3 \log_3 (k/n^{1/5}) \right)}^{4/5} ,
  \end{align*}
  so finally
  \begin{align*}
    |L^\dagger| &\le n^{1/5} + 6 \log_3 (k/n^{1/5}) + 27 n^{1/5} {\left(\log_3 (k/n^{1/5})\right)}^{4/5} \\
                &\le 34 n^{1/5} {\left(\log_3 (k/n^{1/5}) \right)}^{4/5} . \qedhere
  \end{align*}
\end{proof}

\begin{proof}[Proof of the upper bound in \thmref{convex-result}]
  The proof is similar to that of \thmref{monotone-result}.
\end{proof}

\begin{rem}
  As in \remref{cont-monotone}, the argument can replicated in the
  continuous case, for bounded non-increasing convex densities
  supported on $[0, 1]$.
\end{rem}

\section{Discussion}\seclabel{conclusion}

It seems likely, given our results on the idealized tree-based
estimators from \secref{ideal} and \secref{convex}, that the greedy
tree-based estimators also behave well. In particular, we suspect that
our greedy tree-based estimators are minimax-optimal within
logarithmic factors. We leave this open to future work.

It is also often desirable for nonparametric estimators to be
adaptive, in the sense that they attain the optimal minimax rate
without depending on some of the important features of the
nonparametric class in question. In some cases, an adaptive density
estimate can be constructed by first estimating these features, and
then building a density estimate assuming the estimated features. For
example, in \cite{birge-risk}, an adapative estimate for
non-increasing densities is developed by first estimating the size of
the support, and plugging this estimated support size into a
non-adaptive estimate. We expect that in this manner, our method can
be made adaptive.

The techniques of this paper seem to naturally extend to higher
dimensions. Take, for instance, the class of block-decreasing
densities, whose minimax rate was identified by Biau and
Devroye~\cite{block}. This is the class of densities supported on
$[0, 1]^d$ bounded by some constant $B \ge 0$, such that each density
is non-increasing in each coordinate if all other coordinates are held
fixed. The discrete version of this class has each density supported
on $\{1, \dots, k\}^d$, with the monotonicity constraint. In order to
estimate such a density, one could devise an oriented binary splitting
rule analogous to \eqref{id-splitting-rule} and carry out a similar
analysis as performed in \secref{ideal}.

Furthermore, we expect that there are many other classes of
one-dimensional densities whose optimal minimax rate could be
identified using our approach, like the class of $\ell$-monotone
densities on $\{1, \dots, k\}$, where a function $f$ is called
\emph{$\ell$-monotone} if it is non-negative and if $(-1)^j f^{(j)}$
is non-increasing and convex for all $j \in \{0, \dots, \ell - 2\}$ if
$\ell \ge 2$, and where $f$ is non-negative and non-increasing if
$\ell = 1$. This paper tackles the cases of $\ell = 1$ and $\ell =
2$. Write $\sF_{k, \ell}$ for the class of $\ell$-monotone densities
on $\{1, \dots, k\}$. See Balabdaoui and Wellner~\cite{k-monotone,
  k-monotone-2} for texts concerning the density estimation of
$\ell$-monotone densities. It seems likely that our method could be
applied to prove the following conjecture.
\begin{conj}
Let $f \colon \N \times \N \times \N \times \R \to \R$ be
\[
 f(n, k, \ell, \alpha) = \left\{
                         \begin{array}{ll}
                                \sqrt{k/n} & \mbox{if $2 \le k \le \alpha n^{\frac{1}{2\ell + 1}}$,} \\
                                {\left(\frac{\log_\alpha (k/n^{\frac{1}{2\ell + 1}})}{n}\right)}^{\frac{\ell}{2\ell + 1}} & \mbox{if $\alpha n^{\frac{1}{2\ell + 1}} \le k \le n^{\frac{1}{2\ell + 1}} \alpha^n$,} \\
                                1 & \mbox{if $n^{\frac{1}{2\ell + 1}} \alpha^n \le k.$}
                         \end{array}\right.
\]
Let $\ell \ge 1$ be fixed. There are constants $\alpha, C, n_0 \ge 1$
depending only on $\ell$ such that, for $n \ge n_0$,
\[
  \frac{1}{C} \le \frac{\sR_n(\sF_{k, \ell})}{f(n, k, \ell, \alpha)} \le C .
\]
\end{conj}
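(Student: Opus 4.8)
The plan is to follow the three-phase structure already used for $\sF_k$ and $\sG_k$, generalizing the $\ell$-ary tree construction. First, fix $\alpha = \ell + 1$ (the natural generalization of $2$ for $\ell = 1$ and $3$ for $\ell = 2$) and assume $k$ is a power of $\alpha$, absorbing the rounding into constants. For the idealized tree-based estimator, at a node $u$ covering $I_u$ with children $v_1, \dots, v_\alpha$ from left to right, I would split and recurse exactly when the $\ell$-th finite difference of the block masses exceeds its standard-deviation proxy, i.e.\ $\bigl|\Delta^\ell(f_{v_1}, \dots, f_{v_\alpha})\bigr| > \sqrt{(f_{v_1} + \dots + f_{v_\alpha})/n}$, where $\Delta^\ell$ is the $\ell$-th order finite difference operator (for $\ell = 1$ this is $f_{v_1} - f_{v_2}$ up to sign, for $\ell = 2$ it is $f_{v_1} - 2 f_{v_2} + f_{v_3}$). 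On a leaf $I_u$ with children $v_1, \dots, v_\alpha$, the estimate $f^{(\ell)}_n$ would be the unique degree-$(\ell-1)$ polynomial interpolating the points $(m_{v_i}, \bar f_{v_i})$ for a suitable subset of $\ell$ of the $\alpha = \ell+1$ midpoints — by the $\ell$-monotonicity constraint, the local behavior of $f$ is controlled by a polynomial of degree $\ell - 1$ plus a small higher-order correction, which is precisely what the splitting rule bounds.

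Next I would prove the two key propositions. The analogue of \propref{id-tv}/\propref{conv-id-tv}: on any leaf $u$ with $|I_u| > 1$, the local $L_1$ error $A_u$ is bounded by a constant (depending only on $\ell$) times $\bigl|\Delta^\ell(f_{v_1}, \dots, f_{v_\alpha})\bigr| \le \sqrt{f_u/n}$ by the splitting rule; then Cauchy–Schwarz over the leaves gives $\TV(f^{(\ell)}_n, f) \le c_\ell \sqrt{|\{u \in L^{(\ell)} : |I_u| > 1\}|/n}$. The local bound should come from writing $A_u$ as a sum of contributions $B_{v_i}$ on each child interval, using $\ell$-monotonicity to control the sign pattern of $f^{(\ell)}_n - f$ on each piece (as in the $\ell = 2$ case, where convexity forced $B_w$ to have constant sign), and then observing that the difference between the interpolating polynomial and the block averages of $f$ is governed by the $\ell$-th difference. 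The analogue of \propref{id-num-leaves}: bounding $|L^{(\ell)}|$. Here, among siblings at a level where some node is a leaf, iterating the splitting rule yields a recurrence $f_{u_{\alpha i}} \gtrsim i^{2\ell}/n$ (for $\ell = 1$: $i^2/n$; for $\ell = 2$: $i^4/(108 n)$), obtained by induction showing the $\ell$-th difference grows like $i^{2\ell - 1}/n$ and then telescoping $\ell$ times using $\ell$-monotonicity; summing $i^{2\ell}$ over a level gives $|L_j| \lesssim (n q_j)^{1/(2\ell+1)}$, and then Hölder over the $\Theta(\log_\alpha(k/n^{1/(2\ell+1)}))$ relevant levels (with $\sum q_j \le 1$) yields $|L^{(\ell)}| \lesssim n^{1/(2\ell+1)} (\log_\alpha(k/n^{1/(2\ell+1)}))^{2\ell/(2\ell+1)}$.

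The upper bound then follows exactly as before: the class $\sF_\Theta$ of piecewise-degree-$(\ell-1)$ densities with $\ell^\dagger = |L^{(\ell)}|$ pieces contains $f^{(\ell)}_n$; its Yatracos class is contained in the class of sets on which a degree-$(2\ell - 2)$ polynomial (a difference of two degree-$(\ell-1)$ polynomials) is positive, which over $\ell^\dagger$ intervals has VC dimension $O(\ell \cdot \ell^\dagger)$; plugging into \corrref{minimum-distance-risk} and \propref{id-tv}'s analogue gives $\sR_n(\sF_{k,\ell}) \le c_\ell \sqrt{\ell^\dagger/n} \le C_\ell ((\log_\alpha(k/n^{1/(2\ell+1)}))/n)^{\ell/(2\ell+1)}$ in the middle regime; the small-$k$ regime uses the order-$\sqrt{k/n}$ bin-of-size-one histogram, and the large-$k$ regime is trivial. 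The matching lower bound would be a careful Assouad's-Lemma construction, perturbing a smooth $\ell$-monotone profile by localized degree-$\ell$ bumps at geometrically growing scales, mirroring \appref{monotone-lower} and \appref{convex-lower}. The main obstacle I anticipate is the local error bound in the \propref{id-tv} analogue: for $\ell = 1, 2$ the sign analysis of $f^{(\ell)}_n - f$ on each child interval was elementary, but for general $\ell$ one must show that $\ell$-monotonicity pins down the sign structure of (interpolating polynomial $-$ true density) tightly enough that $A_u$ is $O_\ell(|\Delta^\ell|)$ rather than merely $O(|I_u| \cdot |\Delta^\ell|)$ — this requires a genuinely $\ell$-dependent argument, likely via divided differences and the fact that a non-negative $\ell$-th derivative controls all intermediate Taylor remainders, and getting the constant to depend only on $\ell$ (not on $k$ or $n$) is the delicate point.
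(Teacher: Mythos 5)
This statement is a conjecture in the paper: no proof is given, and the Discussion explicitly names the missing ingredient, namely ``good local estimates for $\ell$-monotone densities, in the same flavor as \propref{id-tv} and \propref{conv-id-tv}.'' Your proposal is essentially the same roadmap the paper itself sketches---an $(\ell+1)$-ary tree with a splitting rule on the $\ell$-th finite difference of child masses, a leaf-count bound of order $n^{1/(2\ell+1)}(\log_\alpha(k/n^{1/(2\ell+1)}))^{2\ell/(2\ell+1)}$, the Yatracos/VC argument via \corrref{minimum-distance-risk}, and an Assouad construction for the lower bound---but the step on which everything hinges is exactly the one you do not carry out: the local bound $A_u \le c_\ell\,\bigl|\Delta^\ell(f_{v_1},\dots,f_{v_{\ell+1}})\bigr|$ on each internal leaf, with $c_\ell$ depending only on $\ell$. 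You describe how it ``should come from'' a sign analysis of $f^{(\ell)}_n - f$ on the child intervals and yourself flag it as the delicate point; that is precisely the obstacle the paper identifies, and without it the chain (Cauchy--Schwarz over leaves, then the minimum-distance estimate) does not yield the claimed rate. For $\ell = 1,2$ the paper's sign arguments exploit very specific geometric facts (a monotone curve crosses its mean once; a convex curve lies below its secant and above its tangent-type chords), and it is not established that an interpolating polynomial of degree $\ell-1$ through block averages stays within $O_\ell(|\Delta^\ell|)$ of $f$ in $L_1$ for general $\ell$-monotone $f$.

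There are secondary gaps as well. The recurrence $f_{u_{\alpha i}} \gtrsim i^{2\ell}/n$ is asserted by extrapolating the $\ell = 1$ and $\ell = 2$ computations; for general $\ell$ the splitting rule controls only the $\ell$-th difference of the block masses, and the telescoping argument (the analogue of \eqlabel{concave-induction} and \eqlabel{concave-prob-lower}) must track the signs of all lower-order differences of the aggregated masses at a fixed level and carry an $\ell$-step induction whose constants must remain $\ell$-dependent only. Likewise, the lower bound is only gestured at: in \appref{monotone-lower} and \appref{convex-lower} the bulk of the work is verifying that the perturbed densities satisfy the shape constraints with geometrically growing block lengths, and for general $\ell$ one must check $\ell$-monotonicity of the bumped profiles and re-derive the Hellinger and $L_1$ separation estimates, which is where the exponent $\ell/(2\ell+1)$ and the admissible range of $k$ come from. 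So your proposal is a credible plan that mirrors the paper's stated program, but it does not close the conjecture; the decisive local estimate and the general-$\ell$ inductions remain unproven.
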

The main obstacle in proving the above would be the development of
good local estimates for $\ell$-monotone densities, in the same flavor
as \propref{id-tv} and \propref{conv-id-tv}.

Our approach also likely can be applied to the class of all
log-concave discrete distributions, where we recall that $f : \N \to
[0, 1]$ is called \emph{log-concave} if
\[
  f(x) f(x + 2) \le f(x + 1)^2 , \quad \text{for all } x \ge 1 .
\]
See \cite{kane-logconcave-up, samworth-lower, samworth-survey} for a
small selection of works on the density estimation of $d$-dimensional
log-concave continuous densities. The optimal Hellinger distance
minimax rate (within logarithmic factors) for this class was recently
obtained by Dagan and Kur~\cite{dagan}, who showed that it is attained
by the maximum-likelihood estimate. There remains a small gap between
the best known upper and lower bounds in the TV-distance minimax rate
as of the time of writing.

\section*{Acknowledgments}

We would like to thank the three reviewers and an associate editor for
their helpful comments and suggestions.

\appendix
\section{Lower bounds}\applabel{app-1}

\begin{lem}[Assouad's Lemma~\cite{assouad, comb-methods}]\lemlabel{assouad}
  Let $\sF$ be a class of densities supported on the set $\sX$. Let
  $A_0, A_1, \dots, A_r$ be a partition of $\sX$, and
  $g_{ij} \colon A_i \to \R$ for $0 \le i \le r$ and $j \in \{0, 1\}$
  be some collection of functions. For
  $\theta = (\theta_1, \dots, \theta_r) \in \{0, 1\}^r$, define the
  function $f_\theta \colon \sX \to \R$ by
  \[
    f_\theta(x) = \left\{\begin{array}{ll}
                           g_{00}(x) & \mbox{if $x \in A_0$,} \\
                           g_{i \theta_i}(x) & \mbox{if $x \in A_i$,}
                    \end{array} \right.
  \]
  such that each $f_\theta$ is a density on $\sX$. Let
  $\zeta_i \in \{0, 1\}^n$ agree with $\theta$ on all bits except for
  the $i$-th bit. Then, suppose that
  \[
    0 < \beta \le \inf_\theta \inf_{1 \le i \le r} \int \sqrt{f_\theta f_{\zeta_i}} ,
  \]
  and
  \[
    0 < \alpha \le \inf_\theta \inf_{1 \le i \le r} \int_{A_i} |f_\theta - f_{\zeta_i}| .
  \]
  Let $\sH$ be the hypercube of densities
  \[
    \sH = \{f_\theta \colon \theta \in \{0, 1\}^r\} .
  \]
  If $\sH \subseteq \sF$, then
  \[
    \sR_n(\sF) \ge \frac{r \alpha}{4} \left(1 - \sqrt{2 n (1 - \beta)}\right) .
  \]
\end{lem}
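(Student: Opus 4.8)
The plan is to follow the classical route behind Assouad-type bounds: reduce the minimax risk over $\sF$ to an average risk over the hypercube $\sH$, localize the TV-distance along the partition $A_0, \dots, A_r$, and then lower-bound the per-coordinate contribution by exploiting the indistinguishability of $f_\theta$ and $f_{\zeta_i}$. I would phrase this last step through the coupling characterization \eqref{coupling-interp} of TV-distance rather than a cruder reduction to testing the individual bits $\theta_i$, since the coupling version is what produces the stated constant. Concretely, since $\sH \subseteq \sF$, for any estimator $\hat f_n$ (possibly randomized),
\[
  \sup_{f \in \sF}\E_f\{\TV(\hat f_n, f)\} \ge \frac{1}{2^r}\sum_{\theta \in \{0,1\}^r}\E_\theta\{\TV(\hat f_n, f_\theta)\} ,
\]
so it suffices to bound the right-hand side below by $\frac{r\alpha}{4}(1 - \sqrt{2n(1-\beta)})$ uniformly over $\hat f_n$, after which the infimum over $\hat f_n$ controls $\sR_n(\sF)$. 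Writing $\TV(\hat f_n, f_\theta) = \frac{1}{2}\sum_{i=0}^r\int_{A_i}|\hat f_n - f_\theta|$ and discarding the $i = 0$ term, this reduces to lower-bounding, for each coordinate $1 \le i \le r$, the average $2^{-r}\sum_\theta\E_\theta\{\int_{A_i}|\hat f_n - f_\theta|\}$.

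For fixed $i$, I would group the $2^r$ vectors into $2^{r-1}$ pairs $\{\theta, \zeta_i\}$ differing only in bit $i$ and bound each pair's contribution. The densities $f_\theta$ and $f_{\zeta_i}$ agree off $A_i$, and on $A_i$ they equal $g_{i0}$ and $g_{i1}$ in some order, so the triangle inequality gives, for any function $h$, that $\int_{A_i}|h - f_\theta| + \int_{A_i}|h - f_{\zeta_i}| \ge \int_{A_i}|f_\theta - f_{\zeta_i}| \ge \alpha$. Let $(\mathbf X, \mathbf Y)$ be a maximal coupling of the sample laws $f_\theta^{\otimes n}$ and $f_{\zeta_i}^{\otimes n}$ — conditioning first on the estimator's external randomness if there is any — so that $\Pr\{\mathbf X \ne \mathbf Y\} = \TV(f_\theta^{\otimes n}, f_{\zeta_i}^{\otimes n})$. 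On $\{\mathbf X = \mathbf Y\}$ the estimator returns the same output, so applying the displayed triangle inequality with $h = \hat f_n$ and taking expectations,
\[
  \E_\theta\Big\{\int_{A_i}|\hat f_n - f_\theta|\Big\} + \E_{\zeta_i}\Big\{\int_{A_i}|\hat f_n - f_{\zeta_i}|\Big\} \ge \alpha\big(1 - \TV(f_\theta^{\otimes n}, f_{\zeta_i}^{\otimes n})\big) .
\]

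It remains to control the TV-distance between the product laws in terms of $\beta$: using that TV-distance is dominated by Hellinger distance, the tensorization $\rho(f_\theta^{\otimes n}, f_{\zeta_i}^{\otimes n}) = \rho(f_\theta, f_{\zeta_i})^n$ of the Hellinger affinity $\rho(p, q) = \int\sqrt{p q}$, and the hypothesis $\rho(f_\theta, f_{\zeta_i}) \ge \beta$, one gets $\TV(f_\theta^{\otimes n}, f_{\zeta_i}^{\otimes n}) \le \sqrt{2(1 - \beta^n)} \le \sqrt{2n(1-\beta)}$, the last step from $1 - \beta^n \le n(1-\beta)$. Summing the $2^{r-1}$ pairs for each $i$ and then over the $r$ coordinates, and collecting the factor $\frac{1}{2}$ from TV-to-$L_1$, the ratio $2^{r-1}/2^r = \frac{1}{2}$ from the pairing, and the separation $\alpha$, yields $\sR_n(\sF) \ge \frac{r\alpha}{4}(1 - \sqrt{2n(1-\beta)})$. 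I expect the only genuinely delicate points to be the constant bookkeeping that delivers exactly $1/4$ — which is the reason for routing through the coupling rather than a bit-by-bit testing argument, which would cost a further factor of two — together with the care needed to invoke the coupling when $\hat f_n$ is randomized; the analytic facts about the Hellinger affinity and its behaviour under products are entirely routine.
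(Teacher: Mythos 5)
Your proof is correct. Note that the paper itself does not prove this lemma---it quotes Assouad's Lemma from Devroye and Lugosi---so there is no internal proof to compare against; your argument (averaging over the hypercube, pairing $\theta$ with $\zeta_i$, the triangle inequality on $A_i$, and controlling the closeness of the product measures via tensorization of the Hellinger affinity together with $1-\beta^n \le n(1-\beta)$) is essentially the standard textbook derivation, with your maximal-coupling step playing the same role as the usual bound through $\int \min\bigl(f_\theta^{\otimes n}, f_{\zeta_i}^{\otimes n}\bigr) = 1 - \TV\bigl(f_\theta^{\otimes n}, f_{\zeta_i}^{\otimes n}\bigr)$, and it delivers exactly the stated constant $r\alpha/4$.
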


\subsection{Proof of the lower bound in \thmref{monotone-result}.}\applabel{monotone-lower}

Suppose first that $e^8 n^{1/3} \le k \le n^{1/3} e^n$. Let
$A_1, \dots, A_r$ be consecutive intervals of even cardinality,
starting from the leftmost atom $1$. Split each $A_i$ in two equal
parts, $A_i'$ and $A_i''$. Let $\eps \in (0, 1/\sqrt{2})$, and set
\begin{align*}
  g_{i 0}(x) &= \left\{\begin{array}{ll}
                        \frac{1 + \eps}{r |A_i|} & \mbox{if $x \in A_i'$,} \\
                        \frac{1 - \eps}{r |A_i|} & \mbox{if $x \in A_i''$,}
                      \end{array} \right. \\
  g_{i 1}(x) &= \frac{1}{r |A_i|} .
\end{align*}
It is clear that each $f_\theta$ is a density. In order for each
$f_\theta$ to be monotone, we require that
\[
  \frac{1 - \eps}{|A_i|} \ge \frac{1 + \eps}{|A_{i + 1}|} ,
\]
and in particular
\[
  |A_i| \ge |A_1| \left(\frac{1 + \eps}{1 - \eps}\right)^{i - 1} .
\]
Pick $|A_1| = 2$. Since $\log (1 + \eps) - \log (1 - \eps) \le 4 \eps$
for $\eps \in (0, 1/\sqrt{2})$, it suffices to take
\[
  |A_i| \ge a_i = 2 e^{4 \eps(i - 1)} .
\]
Let $|A_i|$ be the smallest even integer at least equal to $a_i$, so
that $a_i \le |A_i| \le a_i + 2$, and thus
\[
  \sum_{i = 1}^r |A_i| \le 2r + \frac{2 e^{4\eps r}}{e^{4\eps} - 1} \le 2r + \frac{e^{4 \eps r}}{2 \eps} .
\]
Since the support of our densities is $\{1, \dots, k\}$, then we ask
that this last upper bound not exceed $k$. We can guarantee this in
particular with a choice of $r$ and $\eps$ for which
\begin{equation}
  2r \le \frac{k}{2}, \quad \text{and} \quad \frac{e^{4 \eps r}}{2\eps} \le \frac{k}{2} . \eqlabel{monotone-cond}
\end{equation}
Fix $1 \le i \le r$. Then,
\begin{align*}
  \int \left(\sqrt{f_\theta} - \sqrt{f_{\zeta_i}}\right)^2 &= \sum_{x \in A_i} \left(\sqrt{f_\theta(x)} - \sqrt{f_{\zeta_i}(x)}\right)^2 \\
                                                &= \frac{2}{r} - \frac{1}{r}\left(\sqrt{1 + \eps} + \sqrt{1 - \eps}\right) \\
                                                &\le \frac{\eps^2}{r} ,
\end{align*}
so
\begin{align*}
  \int \sqrt{f_\theta f_{\zeta_i}} &= 1 - \frac{1}{2} \int \left(\sqrt{f_\theta} - \sqrt{f_{\zeta_i}}\right)^2 \\
                                   &\ge 1 - \frac{\eps^2}{2r} .
\end{align*}
On the other hand,
\[
  \int_{A_i} |f_\theta - f_{\zeta_i}| = \sum_{x \in A_i} |f_\theta(x) - f_{\zeta_i}(x)| = \frac{\eps}{r} .
\]
Now pick
\[
  \eps = \frac{1}{4} \left(\frac{\log(k/n^{1/3})}{n}\right)^{1/3} ,
\]
and $r$ for which
\[
  \sqrt{\frac{n \eps^2}{r}} \le \frac{1}{2} ,
\]
or equivalently,
\[
  r \ge \frac{1}{4} \left(n \log^2(k/n^{1/3})\right)^{1/3} .
\]
Note that $k \le n^{1/3} e^n$ now implies that
$\eps \in (0, 1/\sqrt{2})$. With this choice, \lemref{assouad} implies
that
\begin{equation}
  \sR_n(\sF_k) \ge \frac{\eps}{4} \left(1 - \sqrt{\frac{n \eps^2}{r}} \right) \ge \frac{1}{32} \left( \frac{\log (k/n^{1/3})}{n} \right)^{1/3} . \eqlabel{monotone-lb-done}
\end{equation}
So we need only verify that these choices of $\eps$ and $r$ are
compatible with \eqref{monotone-cond}. Since $k \ge e^8 n^{1/3}$, then
there is an integer choice of $r$ in the range
\[
  \frac{1}{4} \left(n \log^2 (k/n^{1/3})\right)^{1/3} \le r \le \frac{1}{2} \left(n \log^2 (k/n^{1/3})\right)^{1/3} .
\]
In particular, we can verify that
\begin{align*}
  2r \le \left(n \log^2 (k/n^{1/3})\right)^{1/3} \le \frac{k}{2} ,
\end{align*}
since $2 \log^{2/3} x \le x$ for all $x \ge 0$. Moreover, since
$k \ge e^8 n^{1/3}$, then $\eps \ge \frac{1}{2 n^{1/3}}$, so that
\begin{align*}
  \frac{e^{4 \eps r}}{2 \eps} &\le n^{1/3} e^{4 \eps r} \\
                              &\le n^{1/3} e^{\frac{1}{2} \log (k/n^{1/3})} \\
                              &= n^{1/3} \sqrt{\frac{k}{n^{1/3}}} \\
                              &\le \frac{k}{2} ,
\end{align*}
where this last inequality holds since $\sqrt{x} \le x/2$ for all
$x \ge e^8$, so that \eqref{monotone-lb-done} is proved.

When $k \ge n^{1/3} e^n$, we argue by inclusion that
\[
  \sR_n(\sF_k) \ge \inf_{k \ge n^{1/3} e^n} \sR_n(\sF_k) \ge \frac{1}{32} .
\]

The only remaining case is $k \le e^8 n^{1/3}$. In this case, we offer
a different construction. Now, each $A_i$ will have size $2$ for
$1 \le i \le r$, where $r = \floor{k/2}$. Fix $a, b \in \R$ to be
specified later, and set
\begin{align*}
  g_{i 0}(x) &= \left\{\begin{array}{ll}
                        a - b(2i - 1) & \mbox{if $x \in A_i'$,} \\
                        a - b(2i + 1) & \mbox{if $x \in A_i''$,}
                      \end{array} \right. \\
  g_{i 1}(x) &= a - 2 b i ,
\end{align*}
We insist that
\[
  a - b (2r + 1) = \frac{1 - \eps}{2r}
\]
for some $0 \le \eps \le 1$. Since each $f_\theta$ must be a density,
we need that
\[
  \sum_{i = 1}^r 2(a - 2 bi) = 1 .
\]
Both of these conditions will be satisfied if we pick
\[
  b = \frac{\eps}{2 r^2}, \quad \text{and} \quad a = b + \frac{1 + \eps}{2r} ,
\]
Furthermore, the largest probability of an atom here is
\[
  a - b = \frac{1 + \eps}{2r} \le 1 ,
\]
for $k \ge 2$. Then, for $1 \le i \le r$, we can compute
\begin{align*}
  \int \left(\sqrt{f_\theta} - \sqrt{f_{\zeta_i}}\right)^2 &\le \frac{2 b^2}{a - 2 bi} \\
                                                           &\le \frac{\eps^2}{r^3 (1 - \eps)} ,
\end{align*}
so
\[
  \int \sqrt{f_\theta f_{\zeta_i}} \ge 1 - \frac{\eps^2}{2 r^3(1 - \eps)} .
\]
and
\[
  \int_{A_i} |f_\theta - f_{\zeta_i}| = 2b = \frac{\eps}{r^2} .
\]
Pick $\eps = e^{-12} r \sqrt{k/n}$. Then, since
$2 \le k \le e^8 n^{1/3}$ and $r = \floor{k/2} \ge k/3$, then
$\eps \le 1/2$, and
\[
  \sqrt{\frac{n \eps^2}{r^3 (1 - \eps)}} \le \frac{1}{2} ,
\]
so that
\[
  \pushQED{\qed}
  \sR_n(\sF_k) \ge \frac{\eps}{4 r} \left(1 - \sqrt{\frac{n \eps^2}{r^3 (1 - \eps)}}\right) \ge \frac{1}{8 e^{12}} \sqrt{k/n} . \qedhere
  \popQED
\]

\subsection{Proof of the lower bound in \thmref{convex-result}.}\applabel{convex-lower}

Let $A_1, \dots, A_r$ be the partition in \lemref{assouad}, for an
integer $r \ge 1$ to be specified. Let $j_i$ be the smallest element
of $A_i$, and suppose that each $|A_i|$ is chosen to be a positive
multiple of $3$. We will define the functions $f_\theta$ based on
parameters $\beta_i, \Delta_i \in \R$, to be specified. Let $g_{i0}$
linearly interpolate between the points
\[
  (j_i, \beta_i), \quad \text{and} \quad \left(j_i + \frac{|A_i|}{3}, \frac{\beta_{i + 1} - \beta_i}{3} + \beta_i - \Delta_i \right) ,
\]
on $\{j_i, j_i + 1, \dots, j_i + |A_i|/3 - 1\}$,
and between the points
\[
  \left( j_i + \frac{|A_i|}{3}, \frac{\beta_{i + 1} - \beta_i}{3} + \beta_i - \Delta_i \right), \quad \text{and} \quad (j_i + |A_i|, \beta_{i + 1}) 
\]
on $\{j_i + |A_i|/3, \dots, j_i + |A_i| - 1\}$. Let $g_{i1}$ linearly
interpolate between the points
\[
  (j_i, \beta_i), \quad \text{and} \quad \left(j_i + \frac{2 |A_i|}{3}, \frac{2 (\beta_{i + 1} - \beta_i)}{3} + \beta_i - \Delta_i \right)
\]
on $\{j_i, j_i + 1, \dots, j_i + 2|A_i|/3 - 1\}$, and
between the points
\[
  \left(j_i + \frac{2|A_i|}{3}, \frac{2 (\beta_{i + 1} - \beta_i)}{3} + \beta_i - \Delta_i \right), \quad \text{and} \quad (j_i + |A_i|, \beta_{i + 1})
\]
on $\{j_i + 2|A_i|/3, \dots, j_i + |A_i| - 1\}$.  Then, each
$f_\theta$ will be nonincreasing as long as
$\beta_i \ge \beta_{i + 1}$ for each $1 \le i \le r$, and
\begin{equation}
  \frac{2 (\beta_{i + 1} - \beta_i)}{3} + \beta_i - \Delta_i \ge \beta_{i + 1} \iff \frac{\beta_i - \beta_{i + 1}}{3} \ge \Delta_i . \eqlabel{mono-cond}
\end{equation}
Each $f_\theta$ will be convex as long as the largest slope on $A_i$
is at most the smallest slope on $A_{i + 1}$ for each $1 \le i \le
r$. Equivalently,
\begin{equation}
  \frac{|A_{i + 1}|}{|A_i|} \ge \frac{ \frac{\beta_{i + 1} - \beta_{i + 2}}{3} + \Delta_{i + 1}}{\frac{\beta_i - \beta_{i + 1}}{3} - \Delta_i} . \eqlabel{conv-cond}
\end{equation}
Now, pick $\beta_i = (1 - \eps)^{i - 1} \beta$ for some
$\beta \in \R$, $\eps \in (0, 1)$ to be specified, and
\[
  \Delta_i = \frac{(\beta_i - \beta_{i + 1}) \eps }{3} = \frac{ \beta \eps^2 (1 - \eps)^{i - 1}}{6} ,
\]
for which \eqref{mono-cond} is immediately satisfied. The condition
\eqref{conv-cond} is then equivalent to
\[
  \frac{|A_{i + 1}|}{|A_i|} \ge 1 + \eps .
\]
Pick $|A_1| = 3$. It is sufficient to make the choice
\[
  |A_i| \ge a_i = \frac{3}{(1 - \eps)^{i - 1}} .
\]
Let $|A_i|$ be the smallest integer multiple of $3$ at least as large
as $a_i$, so that $a_i \le |A_i| \le a_i + 3$. If $\eps \le 1/2$, then
\[
  \sum_{i = 1}^r |A_i| \le 3r + \frac{3 e^{2 \eps r}}{2 \eps} .
\]
Since the support of our densities is $\{1, \dots, k\}$, this upper
bound must not exceed $k$, so we impose that
\begin{equation}
  3r \le \frac{k}{2} , \quad \text{and} \quad \frac{ 3 e^{2 \eps r}}{2 \eps} \le \frac{k}{2} . \eqlabel{assouad-conv-conds}
\end{equation}
We must tune $\beta$ in order for each $f_\theta$ to be a density. By
monotonicity, we must have
\[
  1 \le \sum_{i = 1}^r \beta_i |A_i| \le 6 \beta r ,
\]
and
\[
  1 \ge \sum_{i = 1}^r \beta_{i + 1} |A_i| \ge 3(1 - \eps) \beta r ,
\]
so there is a choice of $\beta$ where
\[
  \frac{1}{6 r} \le \beta \le \frac{2}{3 r} ,
\]
as long as $\eps \le 1/2$. Now, fix $1 \le i \le r$. Then,
\[
  \int_{A_i} |f_\theta - f_{\zeta_i}| \ge \frac{|A_i| \Delta_i}{12} \ge \frac{\eps^2 \beta}{12} \ge \frac{\eps^2}{72 r} ,
\]
and
\begin{align*}
  \int (\sqrt{f_\theta} - \sqrt{f_{\zeta_i}})^2 &= \int_{A_i} \frac{(f_\theta - f_{\zeta_i})^2}{(\sqrt{f_{\theta}} + \sqrt{f_{\zeta_i}})^2} \\
                                                      &\le \frac{1}{4 \beta_{i + 1}} \int_{A_i} (f_\theta - f_{\zeta_i})^2 \\
                                                      &\le \frac{|A_i| \Delta_i^2}{4 \beta_{i + 1}} \\
                                                      &\le \frac{\left(3 + \frac{3}{(1 - \eps)^{i - 1}} \right) \left( \frac{\beta \eps^2 (1 - \eps)^{i - 1}}{3} \right)^2}{4 \beta (1 - \eps)^i} \\
                                                &\le \frac{2 \eps^4}{9 r} ,
\end{align*}
as long as $\eps \le 1/2$, whence
\[
  \int \sqrt{f_\theta f_{\zeta_i}} \ge 1 - \frac{\eps^4}{9 r} .
\]
Now, pick
\[
  \eps = \frac{1}{2} \left(\frac{\log (k/n^{1/5})}{n} \right)^{1/5} ,
\]
and $r$ for which
\[
  \sqrt{\frac{2 n \eps^4}{9r}} \le \frac{1}{2} ,
\]
or equivalently,
\[
  r \ge \frac{1}{18} n^{1/5} \log^{4/5} (k/n^{1/5}) .
\]
Note that $k \le n^{1/5} e^n$ now implies that $\eps \le 1/2$. With
this choice, \lemref{assouad} has
\begin{equation}
  \sR_n(\sG_k) \ge \frac{\eps^2}{288} \left(1 - \sqrt{\frac{2n \eps^4}{9r}} \right) \ge \frac{1}{1152} \left( \frac{\log (k/n^{1/5})}{n} \right)^{2/5} , \eqlabel{final-conv-lower}
\end{equation}
so it remains to verify that our choices of $\eps$ and $r$ are
compatible with \eqref{assouad-conv-conds}. Since $k \ge e^{40} n^{1/5}$,
then there is an integer choice of $r$ in the range
\[
  \frac{1}{18} n^{1/5} \log^{4/5} (k/n^{1/5}) \le r \le \frac{1}{9} n^{1/5} \log^{4/5} (k/n^{1/5}) .
\]
In particular, we can verify that
\[
  3r \le \frac{1}{3} n^{1/5} \log^{4/5} (k/n^{1/5}) \le \frac{k}{2} ,
\]
since $(2/3) \log^{4/5} x \le x$ for all $x \ge 0$. Moreover, since
$k \ge e^{40} n^{1/5}$, then $\eps \ge 1/n^{1/5}$, so that
\begin{align*}
  \frac{3 e^{2 \eps r}}{2 \eps} &\le \frac{3 n^{1/5} e^{2 \eps r}}{2} \\
                                &\le \frac{3 n^{1/5} e^{\frac{1}{9} \log (k/n^{1/5})}}{2} \\
                                &= \frac{3 n^{1/5}}{2} \left(\frac{k}{n^{1/5}}\right)^{1/9} \\
                                &\le \frac{k}{2} ,
\end{align*}
since $x^{1/9} \le x/3$ for all $x \ge e^{40}$, so that
\eqref{final-conv-lower} is proved.

When $k \ge n^{1/5} e^n$, we argue by inclusion that
\[
  \sR_n(\sG_k) \ge \inf_{k \ge n^{1/5} e^n} \sR_n(\sG_k) \ge \frac{1}{1152} .
\]

It remains to prove the case $k \le e^{40} n^{1/5}$. Observe that
$\sG_2 = \sF_2$, so the lower bound for $k = 2$ follows from
\appref{monotone-lower}, so we assume that $k \ge 3$. Now, each $A_i$
will have size $3$ for $1 \le i \le r$, where $r = \floor{k/3}$. Fix
$a, b \in \R$ to be specified later, and set
\begin{align*}
  g_{i0}(j_i) &= \beta_i  \\
  g_{i0}(j_i + 1) &= \frac{2 \beta_i + \beta_{i + 1}}{3} - \Delta_i \\
  g_{i0}(j_i + 2) &= \frac{\beta_i + 2 \beta_{i + 1}}{3} - \frac{\Delta_i}{2}
\end{align*}
and
\begin{align*}
  g_{i1}(j_i) &= \beta_i \\
  g_{i1}(j_i + 1) &= \frac{2 \beta_i + \beta_{i + 1}}{3} - \frac{\Delta_i}{2}  \\
  g_{i1}(j_i + 2) &= \frac{\beta_i + 2 \beta_{i + 1}}{3} - \Delta_i .
\end{align*}
Each $f_\theta$ will be non-increasing as long as
$\beta_i \ge \beta_{i + 1}$, and
\[
  \Delta_i \le \frac{\beta_i - \beta_{i + 1}}{3} ,
\]
for each $1 \le i \le r$. Convexity will follow if
\[
  \beta_{i + 1} - \left( \frac{\beta_i + 2 \beta_{i + 1}}{3} - \Delta_i \right) \le \left( \frac{2 \beta_{i + 1} + \beta_{i + 2}}{3} - \Delta_{i + 1}\right) - \beta_{i + 1} ,
\]
or equivalently,
\[
  \frac{\beta_{i} - \beta_{i + 1}}{3} - \Delta_i \ge \frac{\beta_{i + 1} - \beta_{i + 2}}{3} + \Delta_{i + 1} .
\]
We need also that $\beta_1 \le 1$, $\beta_{r + 1} \ge 0$, and
\[
  \sum_{i = 1}^r \left(2 \beta_i + \beta_{i + 1} - \frac{3 \Delta_i}{2}\right) = 1 .
\]
Take $\beta_{i + 1} = \beta_i - 3 \Delta_i - \alpha (r - i)$ for
$\alpha \ge 0$ to be specified.. Monotonicity follows, and convexity
will follow if
\begin{align*}
  \frac{\alpha (r - i)}{3}  &\ge 2 \Delta_{i + 1} + \frac{\alpha (r - i - 1)}{3} \\
  \iff  \Delta_{i + 1} &\le \frac{\alpha}{6} .
\end{align*}
So take each $\Delta_i = \alpha/6$. Then,
\[
  \beta_i = \beta_1 - \frac{\alpha (i - 1)}{2} -  \alpha \sum_{j = 1}^{i - 1} (r - i ) ,
\]
and in particular,
\[
  \beta_{r + 1} = \beta_1 - \frac{\alpha r}{2} - \frac{\alpha (r - 1) r}{2} = \beta_1 - \frac{\alpha r^2}{2} .
\]
Take $\alpha = \eps/r^3$ for some $0 \le \eps \le 1$, whence
\[
  \beta_{r + 1} = \beta_1 - \frac{\eps}{2r} .
\]
By monotonicity,
\[
  1 \le \sum_{i = 1}^r 3\beta_i \le 3 r \beta_1 ,
\]
and
\[
  1 \ge \sum_{i = 1}^r 3 \beta_{i + 1} \ge 3 r \left( \beta_1 - \frac{\eps}{2r} \right) \ge 3r \beta_1 - \frac{3 \eps }{2} ,
\]
so that the right choice of $\beta_1$ satisfies
\[
  \frac{1}{3r} \le \beta_1 \le \frac{5}{6r} .
\]
Fix $1 \le i \le r$. Then,
\begin{align*}
  \int_{A_i} |f_\theta - f_{\zeta_i}| = \Delta_i = \frac{\eps}{6 r^3} ,
\end{align*}
and if $\eps \le 1/2$,
\begin{align*}
  \int_{A_i} \left(\sqrt{f_\theta} - \sqrt{f_{\zeta_i}}\right)^2 \le \frac{\Delta_i^2}{8 \beta_{i + 1}} \le \frac{\eps^2}{24 r^5} .
\end{align*}
Finally, pick $\eps = e^{-100} r^2 \sqrt{k/n}$. Since
$k \le e^{40} n^{1/5}$ and $r = \floor{k/3} \ge k/6$, then
$\eps \le 1/2$, and
\[
  \sqrt{\frac{n \eps^2}{24 r^5}} \le \frac{1}{2} ,
\]
so by \lemref{assouad},
\[
  \pushQED{\qed}
  \sR_n(\sG_k) \ge \frac{\eps}{24 r^2} \left(1 - \sqrt{\frac{n \eps^2}{24 r^5}}\right) \ge \frac{1}{48 e^{100}} \sqrt{k/n} . \qedhere
  \popQED
\]

\bibliographystyle{imsart-number}
\bibliography{main}

\end{document}